\newtheorem{lem}{Lemma}[section]
\newtheorem{thm}[lem]{Theorem}
\newtheorem{cor}[lem]{Corollary}
\theoremstyle{remark}
\newtheorem{rem}[lem]{Remark}
\DeclareMathOperator\card{Card}
\numberwithin{equation}{section}
\begin{document}
\title[Hausdorff dimensions of level sets related to moving digit means]{Hausdorff dimensions of level sets related to moving digit means}
\author{Haibo Chen}
 \address{School of Statistics and Mathematics, Zhongnan University of Economics and Law, Wu\-han, Hubei, 430073, China}
  \email{hiboo\_chen@sohu.com}

\begin{abstract}
In this paper, we will introduce and study the lower moving digit mean $\b{\it M}(x)$ and the upper moving digit mean $\bar{M}(x)$ of $x\in[0,1]$ in $p$-adic expansion, where $p\geq2$ is an integer. Moreover, the Hausdorff dimension of level set  
\[B(\alpha,\beta)=\left\{x\in [0,1]\colon \b{\it M}(x)=\alpha,\bar{M}(x)=\beta\right\}\]
is determined for each pair of numbers $\alpha$ and $\beta$ satisfying with $0\leq\alpha\leq\beta\leq p-1$.
\end{abstract}

\subjclass[2010]{Primary 11K55; Secondary 28A80.}
\keywords{moving digit mean, Hausdorff dimension, entropy function, Moran set.}
\maketitle

\section{Introduction}
To determine the Hausdorff dimensions of sets of numbers in which the distributions of the digits are of specific characters for some representation is a fundamental and important problem in number theory and multifractal analysis. It has a long history and there are a great many classic work on this topic, suh as Besicovitch~\cite{Be}, Eggleston~\cite{E}, Billingsley~\cite{Bi}, Barreira et al.~\cite{BSS} and Fan et al.~\cite{FF,FFW}, in which the sets are associated with frequencies of digits of numbers in different expansions, long-term time averages and ergodic limits, etc.
Actually, the expressions in these sets are of a common feature, i.e., they are usually described by the arithmetic mean. Different from this, in this paper we would like to study the moving digit means of numbers and investigate the corresponding level sets related to it in the unit interval. 

The moving digit mean is derived from the concept of tangential dimension at a point for measure studied by Guido and Isola~\cite{GI1,GI2}. To be precise, if the measure is a Bernoulli measure, then the tangential dimension is of a linear relation with the moving digit mean (see~\cite{CWX}). More significantly, when we explore the multifractal behavior at a point of a measure, the tangential dimension is more sensitive than the local dimensions of measures, which enables the tangential dimension to provide more information than the local dimension. Thus, compared with the arithmetic mean, the moving digit mean used in the description of level sets can also provide more information on the distributions of digits of numbers. In the following, we introduce the corresponding concepts and notations.

Let $p\geq 2$ be an integer and $A=\{0,1,\ldots p-1\}$ be the alphabet with $p$ elements. It is known that each number $x\in I= [0,1]$ can be expanded into an infinite non-terminating expression  
\[\sum_{n=1}^\infty\frac{x_n}{p^n}=0.x_1x_2x_3\ldots,\quad\text{where}\ x_n\in A,\ n\geq1,\] 
which is called the $p$-adic expansion of $x$. Denoted by $S_n(x)=\sum_{i=1}^nx_i, n\geq1$, the $n$-th partial sum of $x$. Let $T\colon I\to I$ be the shift operator defined by
\[Tx=0.x_2x_3x_4\ldots,\quad\text{for any}\ x=0.x_1x_2x_3\ldots\in I.\]  
Let $x\in I$, we call
\begin{align}
  \b{\it M}(x)=\lim_{n\to\infty}\varliminf_{m\to\infty}\frac{S_n(T^mx)}{n}\quad\text{and}\quad\bar{M}(x)=\lim_{n\to\infty}\varlimsup_{m\to\infty}\frac{S_n(T^mx)}{n}
\end{align}	
\emph{the lower and upper moving digit means} of $x$ respectively. 

\begin{rem}
	Let $n\geq1$. Denote by
	\[\b{\it M}_n(x)=\varliminf_{m\to\infty}\frac{S_n(T^mx)}{n}\quad\text{and}\quad\bar{M}_n(x)=\varlimsup_{m\to\infty}\frac{S_n(T^mx)}{n}\]
	the $n$-th lower and upper moving digit means respectively. It is easy to check that, for any $x\in I$, the two sequences $\{-\b{\it M}_n(x)\}_{n\geq1}$ and $\{\bar{M}_n(x)\}_{n\geq1}$ are both subadditive since they satisfy  the inequalities:
	\[-\b{\it M}_{m+n}(x)\leq-\b{\it M}_m(x)-\b{\it M}_n(x)\quad\text{and}\quad\bar{M}_{m+n}(x)\leq\bar{M}_m(x)+\bar{M}_n(x),\quad m,n\geq1.\] 
	Thus, the limits of $\b{\it M}_n(x)$ and $\bar{M}_n(x)$ always exist as $n\to\infty$. So, the definitions of $\b{\it M}(x)$ and $\bar{M}(x)$ are both reasonable.
\end{rem}

To investigate the influence of the lower and upper moving digit means on the distribution of digits of numbers, define the level set
\begin{align}
	B(\alpha,\beta)=\{x\in I\colon \b{\it M}(x)=\alpha,\bar{M}(x)=\beta\},\quad\text{where}\ 0\leq\alpha\leq\beta\leq p-1,
\end{align}
which may be called \emph{Banach set} with lower level $\alpha$ and upper level $\beta$. In this paper, we would like to determine the Hausdorff dimension of set $B(\alpha,\beta)$ for any $p\geq2$, which is a non-trivial and meaningful generalization of the work in \cite{CWX}. First, we give the definition of $p$-adic entropy function below.

Let $0\leq\alpha\leq p-1$ and $\delta>0$. Denote 
\[H(\alpha,n,\delta)=\left\{x_1x_2\cdots x_n\in A^n\colon n(\alpha-\delta)<\sum_{i=1}^nx_i<n(\alpha+\delta)\right\}\]
and $h(\alpha,n,\delta)=\card H(\alpha,n,\delta)$. Here and in the sequel, the symbol $\card$ is the cardinality of some finite set. Furthermore, define the $p$-adic entropy function as
\begin{align}\label{definiton h alpha}
	h(\alpha)=\lim_{\delta\to0}\varlimsup_{n\to\infty}\frac{\log h(\alpha,n,\delta)}{(\log p)n},\quad 0\leq\alpha\leq p-1.
\end{align}
It is evident that $0\leq h(\alpha)\leq1$ for any $0\leq\alpha\leq p-1$. 

Denote by $\dim_H$ the Hausdorff dimension of some set. Then we have
\begin{thm}\label{theorem main theorem}
	For any $0\leq\alpha\leq\beta\leq p-1$, we have
\begin{align}\label{formula main}
	\dim_HB(\alpha,\beta)=\sup_{\alpha\leq t\leq\beta}h(t).
\end{align}
\end{thm}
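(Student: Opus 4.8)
The plan is to establish the two inequalities $\dim_H B(\alpha,\beta)\le\sup_{\alpha\le t\le\beta}h(t)$ and $\dim_H B(\alpha,\beta)\ge\sup_{\alpha\le t\le\beta}h(t)$ separately, the second one being the genuine content.

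For the \emph{upper bound}, I would first unwind the definitions: if $x\in B(\alpha,\beta)$, then for every small $\delta>0$ and all large $n$, the partial sums $S_n(T^m x)$ satisfy $n(\alpha-\delta)<S_n(T^m x)$ for all large $m$ (from $\b{\it M}_n(x)\ge\alpha-\delta$) and $S_n(T^m x)<n(\beta+\delta)$ for infinitely many arbitrarily large $m$ together with the bound coming from $\bar M_n(x)\le\beta+\delta$. The key point is that for a fixed long block length $n$, the digit string of $x$ is eventually covered by overlapping length-$n$ windows whose digit sums lie (roughly) in $[n(\alpha-\delta),n(\beta+\delta)]$; hence the set of admissible length-$N$ prefixes of points in $B(\alpha,\beta)$ is, up to a controlled error, a concatenation of blocks each from $\bigcup_{\alpha\le t\le\beta}H(t,n,\delta)$. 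Counting these and using a natural cover of $B(\alpha,\beta)$ by cylinders of generation $N$ gives a box-counting (hence Hausdorff) upper bound of $\varlimsup_n \frac{1}{(\log p)n}\log\bigl|\bigcup_{\alpha\le t\le\beta}H(t,n,\delta)\bigr|$; letting $\delta\to0$ and noting $\bigl|\bigcup_{\alpha\le t\le\beta}H(t,n,\delta)\bigr|\le (n+1)\max_{\alpha\le t\le\beta}h(t,n,\delta)$ and that $h$ is upper semicontinuous collapses this to $\sup_{\alpha\le t\le\beta}h(t)$.

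For the \emph{lower bound}, fix $t\in[\alpha,\beta]$ with $h(t)$ close to the supremum; I would construct a Moran-type (Cantor-like) subset $F\subseteq B(\alpha,\beta)$ of Hausdorff dimension at least $h(t)-\vep$. The construction alternates, in carefully spaced stages along the digit expansion, between three kinds of blocks: long blocks built from strings in $H(t,n,\delta)$ (which supply the dimension and pin down the ``typical'' moving mean near $t$), sparse blocks of all-$0$'s or near-$0$ strings (to force $\b{\it M}_n$ down towards $\alpha$ by making $\varliminf_m S_n(T^m x)/n$ as small as $\alpha$), and sparse blocks of all-$(p-1)$'s or high-sum strings (to force $\bar M_n$ up towards $\beta$). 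The lengths of the ``extremal'' blocks must grow, but the gaps between them must grow much faster, so that the all-$0$ and all-$(p-1)$ excursions are asymptotically negligible in density — this guarantees they contribute nothing to the Hausdorff dimension while still producing the required $\varliminf=\alpha$ and $\varlimsup=\beta$ for every window length $n$. One then puts the natural Moran measure on $F$ (uniform on the $H(t,n,\delta)$-choices, trivial on the forced blocks) and applies the mass distribution principle / a standard Moran-set dimension formula (e.g. from the Moran set literature the paper alludes to in its keywords) to get $\dim_H F\ge h(t)-O(\delta,\vep)$; letting $\delta,\vep\to0$ and optimizing over $t$ finishes the proof.

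The main obstacle is the lower-bound construction: one must simultaneously (i) keep the digit-sum of \emph{every} sufficiently long window in $[n(\alpha-\delta),n(\beta+\delta)]$ so that $x$ actually lies in $B(\alpha,\beta)$ and not in some larger level set, (ii) arrange that the $\varliminf$ over shifts hits exactly $\alpha$ and the $\varlimsup$ hits exactly $\beta$ (this is where the spacing of the extremal blocks is delicate — a window straddling the boundary between a $t$-block and a $0$-block has an intermediate sum, so the block lengths have to dominate $n$), and (iii) ensure the density of non-$H(t,n,\delta)$ blocks tends to $0$ fast enough that the dimension of the Moran set is not diminished. Balancing these three requirements against each other, while keeping the Moran ratios in a regime where the dimension formula applies, is the technical heart of the argument; once the schedule of block lengths and gaps is chosen correctly, the dimension computation itself is routine.
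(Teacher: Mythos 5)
Your lower-bound construction, which is the heart of the matter, does not actually produce points of $B(\alpha,\beta)$. Because $\b{\it M}(x)=\lim_n\varliminf_m S_n(T^mx)/n$ and $\bar{M}(x)=\lim_n\varlimsup_m S_n(T^mx)/n$ are limits over window lengths $n\to\infty$, excursion blocks of bounded length are invisible in these limits; so, as you yourself note, the lengths of your ``extremal'' blocks must grow without bound. But then, for every fixed $n$, length-$n$ windows lying entirely inside the recurring all-$0$ (resp.\ all-$(p-1)$, or near-extremal) blocks have mean $\approx 0$ (resp.\ $\approx p-1$), so every constructed point has $\b{\it M}(x)=0$ and $\bar{M}(x)=p-1$: it lies in $B(0,p-1)$, not $B(\alpha,\beta)$, unless $\alpha=0$ and $\beta=p-1$. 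This also contradicts your own requirement (i). The excursions must instead be growing blocks of digit density $\approx\alpha$ and $\approx\beta$, and -- a second point your sketch does not address -- their digit sums must be controlled at \emph{every} sub-scale, not just over the whole block: a word of length $L$ in $H(\alpha,L,\delta)$ may contain a run of zeros of length comparable to $L$, and since blocks of all lengths occur, this again forces $\varliminf_m S_n(T^mx)/n=0$ for every $n$. The paper handles exactly this with the recursive sets $W_n(\alpha,M)$, $W_n(\beta,M)$, whose words decompose at every dyadic scale $2^iM$ into subwords of sum $[\alpha 2^iM]$ or $[\alpha 2^iM]+1$ (Remark 4.6); it then makes the $\alpha$-blocks of density zero relative to the $\beta$-blocks and uses the digit-deletion invariance lemma (Lemma 5.2) together with the homogeneous Moran dimension formula (Lemmas 3.2, 4.2, 4.5) to get $\dim_H B(\alpha,\beta)\ge h(\beta)$ when $\beta<(p-1)/2$, symmetrically $h(\alpha)$ when $\alpha>(p-1)/2$, and it inserts density-$(p-1)/2$ blocks to reach dimension $1$ in the middle case; by the monotonicity and symmetry of $h$ this is precisely $\sup_{\alpha\le t\le\beta}h(t)$. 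Your three-block skeleton is the right shape, but with the wrong digit content in the excursion blocks and without the multi-scale sum control the set you build sits in the wrong level set, so the proposal as written fails.

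Your upper bound also has a gap, though a fixable one: membership in $B(\alpha,\beta)$ is a tail condition, so (as long as $B(\alpha,\beta)\neq\emptyset$) \emph{every} word of $A^N$ occurs as a length-$N$ prefix of a point of $B(\alpha,\beta)$, and a direct count of admissible prefixes gives nothing. One must first split $B(\alpha,\beta)$ into countably many pieces according to the position $m_0$ beyond which all length-$n$ windows have sums in $(n(\alpha-\delta),n(\beta+\delta))$, run your block-counting cover on each piece, and use countable stability of Hausdorff dimension (box dimension itself is not countably stable). The paper avoids this bookkeeping by proving $\b{\it M}(x)\le\b{\it A}(x)\le\bar{A}(x)\le\bar{M}(x)$ (Lemma 5.1), so that $B(\alpha,\beta)\subset\bar{E}(\beta)$ or $\b{\it E}(\alpha)$, and then quoting the Besicovitch-set dimensions of Theorem 3.1, which yields the same bound $\sup_{\alpha\le t\le\beta}h(t)$.
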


In particular, let $0\leq\alpha\leq p-1$ and write
\begin{align}
	B(\alpha):=B(\alpha,\alpha)=\{x\in I\colon \b{\it M}(x)=\bar{M}(x)=\alpha\}.
\end{align}
Then we may obtain immediately that 
\begin{cor}\label{corollary main corollary}
	For any $0\leq\alpha\leq p-1$, we have $\dim_HB(\alpha)=h(\alpha)$. 
\end{cor}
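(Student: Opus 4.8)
The plan is to obtain the corollary as the diagonal specialization $\beta=\alpha$ of Theorem~\ref{theorem main theorem}. First I would check that the hypothesis is compatible: the corollary assumes $0\leq\alpha\leq p-1$, and substituting $\beta=\alpha$ turns the theorem's constraint $0\leq\alpha\leq\beta\leq p-1$ into $0\leq\alpha\leq\alpha\leq p-1$, which holds for every such $\alpha$. Hence Theorem~\ref{theorem main theorem} applies verbatim to the pair $(\alpha,\alpha)$, with no further argument needed.

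Next I would unwind the definitions. By the definition $B(\alpha):=B(\alpha,\alpha)$, the set appearing in the corollary is literally $B(\alpha,\alpha)$, that is, the set of $x\in I$ with $\b{\it M}(x)=\bar{M}(x)=\alpha$. Applying Theorem~\ref{theorem main theorem} to this pair gives
\[
\dim_H B(\alpha)=\dim_H B(\alpha,\alpha)=\sup_{\alpha\leq t\leq\alpha}h(t).
\]
The only remaining step is to evaluate the supremum. The index set $\{t\colon\alpha\leq t\leq\alpha\}$ is the degenerate interval $[\alpha,\alpha]=\{\alpha\}$, a single point, so the supremum collapses to the single value $h(\alpha)$. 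Chaining these equalities yields $\dim_H B(\alpha)=h(\alpha)$, as claimed.

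Since the corollary is a pure specialization, there is essentially no obstacle in its own proof: all the analytic content, namely the upper estimate on the dimension together with the matching lower bound encoded by the entropy function $h$ (via the Moran-type construction flagged in the keywords), already resides in Theorem~\ref{theorem main theorem}. The two points that genuinely need checking here are merely the compatibility of the range constraint and the collapse of the supremum over a singleton, both immediate. Consequently, once the theorem is in hand, the corollary follows as a one-line deduction.
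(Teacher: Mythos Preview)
Your proposal is correct and matches the paper's approach exactly: the paper states the corollary as an immediate consequence of Theorem~\ref{theorem main theorem} (``we may obtain immediately that''), and your specialization $\beta=\alpha$ together with the collapse of $\sup_{\alpha\le t\le\alpha}h(t)$ to $h(\alpha)$ is precisely the intended one-line deduction.
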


Note that we will discuss another set $B^\ast(\alpha)$ which is related to the moving digit mean $M(x)$ in the end of this paper. As a result, $B(\alpha)$ is   different from and large than $B^\ast(\alpha)$ in the sense of Hausdorff dimension.

Moreover, in the case $p=2$, we can give an explicit expression to the binary entropy function $h_2$ by the following calculation:
\begin{align}
	  h_2(\alpha)=\lim_{\delta\to0}\varlimsup_{n\to\infty}\frac{\log \sum_{i=[n(\alpha-\delta)]+1}^{[n(\alpha+\delta)]}\binom{n}{i}}{(\log 2)n}=\frac{-\alpha\log\alpha-(1-\alpha)\log(1-\alpha)}{\log2},
\end{align}
where $\binom ni$ is the binomial coefficient. Thus, the following conclusion can be easily obtained by Theorem \ref{theorem main theorem} and Corollary \ref{corollary main corollary}. 

\begin{cor}
	Let $p=2$. If $0\leq\alpha\leq\beta\leq 1$, then $\dim_HB(\alpha,\beta)=\sup_{\alpha\leq t\leq\beta}h_2(t)$. If $0\leq\alpha\leq1$, then $\dim_HB(\alpha)=h_2(\alpha)$.
\end{cor}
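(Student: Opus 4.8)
The plan is to establish the two inequalities $\dim_H B(\alpha,\beta) \le \sup_{\alpha\le t\le\beta} h(t)$ and $\dim_H B(\alpha,\beta) \ge \sup_{\alpha\le t\le\beta} h(t)$ separately; the lower bound is where the real work lies. For the upper bound, I would first observe that if $x\in B(\alpha,\beta)$ then for every $\delta>0$ and all sufficiently long blocks of consecutive digits appearing infinitely often along the orbit, the digit sum of such a block of length $n$ lies between $n(\alpha-\delta)$ and $n(\beta+\delta)$; more carefully, the condition $\b{\it M}(x)=\alpha$, $\bar M(x)=\beta$ forces that for large $n$ every length-$n$ window $x_{m+1}\cdots x_{m+n}$ with $m$ large has average in $[\alpha-\delta,\beta+\delta]$, hence each such window lies in some $H(t,n,\delta)$ with $\alpha-\delta\le t\le \beta+\delta$. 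Covering $B(\alpha,\beta)$ by the cylinders of length $n$ determined by these admissible windows (after discarding a finite prefix, which does not affect Hausdorff dimension), the number of cylinders is at most $C\cdot\sum_{t}h(t,n,\delta)$ over an $O(1/\delta)$-net of values $t$, so the $s$-dimensional Hausdorff measure is controlled by $p^{-ns}\cdot (1/\delta)\max_t h(t,n,\delta)$. Taking $n\to\infty$, then $\delta\to0$, and using the definition \eqref{definiton h alpha} of $h$ together with its upper semicontinuity (which I would record as a preliminary lemma, proved from the fact that $h$ is the Legendre-type limit of log-cardinalities and is concave, hence continuous on $(0,p-1)$ and upper semicontinuous at the endpoints), gives $\dim_H B(\alpha,\beta)\le \sup_{\alpha-\delta\le t\le\beta+\delta}h(t)\to \sup_{\alpha\le t\le\beta}h(t)$.

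For the lower bound, fix any $t$ with $\alpha\le t\le\beta$ and any $\eta>0$; I will construct a Moran-type Cantor subset $E\subseteq B(\alpha,\beta)$ with $\dim_H E\ge h(t)-\eta$. The idea is to build $x$ by concatenating long finite blocks: infinitely many ``$t$-blocks'' whose digit averages are close to $t$ and which are chosen freely from a large subset of $H(t,n_k,\delta_k)$ (these supply the dimension), interspersed with relatively short ``synchronizing blocks'' — runs of the digit $0$ and runs of the digit $p-1$ — arranged and scaled so that along the shifted orbit the $\varliminf$ of window-averages is exactly $\alpha$ and the $\varlimsup$ is exactly $\beta$, while the proportion of digits spent in synchronizing blocks tends to $0$ so they cost no dimension. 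Concretely, with block lengths $n_k\to\infty$ chosen so that $\frac{\log h(t,n_k,\delta_k)}{(\log p)n_k}\ge h(t)-\eta$ and gap lengths $g_k = o(n_1+\cdots+n_k)$ but $g_k\to\infty$, the set $E$ of all $x$ whose $p$-adic expansion is a prescribed interleaving of free choices from $H(t,n_k,\delta_k)$ and fixed $0$- and $(p-1)$-runs is a homogeneous Moran set. A Moran-set dimension formula (the mass distribution principle applied to the natural measure giving equal weight to each cylinder at each level) then yields $\dim_H E = \varliminf_k \frac{\sum_{j\le k}\log h(t,n_j,\delta_j)}{(\log p)\sum_{j\le k}(n_j+g_j)} \ge h(t)-\eta$ once the gaps are negligible. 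Verifying $E\subseteq B(\alpha,\beta)$ is a direct computation: for $n$ fixed and $m$ large, a length-$n$ window either sits inside a $t$-block (average $\approx t$), inside a $0$-run (average $\to 0$, giving the $\varliminf\le\alpha$ after also using a run that forces average down to $\alpha$ — I would actually use runs of $0$'s long enough that some window has average $\alpha$ and runs of $(p-1)$'s long enough that some window has average $\beta$), or straddles a boundary; controlling the straddling case uses $g_k,n_k\to\infty$ so boundary effects wash out, and one checks the limit over $n$ pins $\b{\it M}=\alpha$, $\bar M=\beta$.

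The main obstacle is the simultaneous control of both the $\varliminf$ and the $\varlimsup$ of the window-averages along the whole tail orbit while not sacrificing Hausdorff dimension: the synchronizing blocks must be frequent enough and long enough (growing to $\infty$) to force the extreme averages down to $\alpha$ and up to $\beta$ for \emph{every} window length $n$ in the limit, yet sparse enough (total length $o(\sum n_k)$) that the Moran dimension is still $h(t)-\eta$. Balancing these — and handling the boundary-straddling windows, plus the degenerate cases $\alpha=0$, $\beta=p-1$, or $\alpha=\beta$ where some synchronizing blocks are unnecessary or the whole argument collapses to Corollary \ref{corollary main corollary} — is the delicate part; the upper-bound direction and the Moran mass-distribution estimate are comparatively routine. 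Finally, taking the supremum over $t\in[\alpha,\beta]$ of the lower bounds $h(t)-\eta$ and letting $\eta\to0$ completes the proof of \eqref{formula main}.
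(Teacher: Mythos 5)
You are not proving the corollary the way the paper does: in the paper this statement is an immediate specialization of Theorem \ref{theorem main theorem} and Corollary \ref{corollary main corollary} to $p=2$, combined with the Stirling computation giving $h_2(\alpha)=\bigl(-\alpha\log\alpha-(1-\alpha)\log(1-\alpha)\bigr)/\log 2$; you are instead re-deriving the main dimension formula from scratch. Your upper-bound half is essentially sound (it is close in spirit to the paper's use of $\b{\it M}(x)\leq\b{\it A}(x)\leq\bar{A}(x)\leq\bar{M}(x)$ together with the Besicovitch sets), but the lower-bound construction has a genuine gap: the set $E$ you build is not contained in $B(\alpha,\beta)$. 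First, your synchronizing blocks are runs of $0$'s and of $(p-1)$'s whose lengths tend to infinity. Then for every fixed $n$ there are infinitely many length-$n$ windows consisting entirely of $0$'s (resp.\ of $(p-1)$'s), so $\b{\it M}_n(x)=0$ and $\bar{M}_n(x)=p-1$ for every $n$, hence $\b{\it M}(x)=0$ and $\bar{M}(x)=p-1$ for every $x\in E$; this equals $(\alpha,\beta)$ only in the extreme case $\alpha=0$, $\beta=p-1$. A straddling window of average exactly $\alpha$ does not rescue this, because the moving mean is a liminf over \emph{all} far-out windows and the all-zero sub-windows dominate; and if instead you keep the runs of bounded length, they can no longer drag the liminf of long-window averages down to $\alpha<t$ at all.

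Second, and independently, choosing the dimension-carrying blocks \emph{freely} from $H(t,n_k,\delta_k)$ with $n_k\to\infty$ constrains only the total digit sum of each block, not the sums of its sub-windows: a typical admissible block of length $n_k$ may begin with a run of $(p-1)$'s and end with a run of $0$'s of length comparable to $n_k$, so typical points of $E$ again have $\b{\it M}(x)=0$ and $\bar{M}(x)=p-1$. The missing device is exactly what the paper builds in Section 4: the recursively defined sets $W_n(\alpha,M)$, $V_n(\alpha,M)$, whose words have prescribed digit sums on every dyadic sub-block at every scale (Remark \ref{remark decompose}), so that \emph{all} sufficiently long windows inside a block have average close to the target value; the liminf and limsup are then pinned at $\alpha$ and $\beta$ by interleaving such $\alpha$-pinned and $\beta$-pinned blocks (and, when $(p-1)/2\in[\alpha,\beta]$, fixed-length blocks near $(p-1)/2$) with the dimension-irrelevant blocks occurring with density zero, as in $\mathcal{W}_M(\alpha,\beta)$ and $\mathcal{W}_{M,n_0,\delta_0}(\alpha,\beta)$. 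Without an all-scales sum control of this kind your Moran set fails the membership claim $E\subseteq B(\alpha,\beta)$, and the lower bound does not follow.
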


In the present paper, the readers are assumed to know well the definitions and basic properties of Hausdorff dimension and Hausdorff measure. For these and more related theory, one can refer to Falconer's book~\cite{F97}. 

The structure of this paper is as follows. In the next section, the concepts of $p$-adic lower and upper entropy functions are introduced, their relations to the $p$-adic entropy function are also shown. In Section 3, the concepts Besicovitch sets in $p$-adic expansion are introduced. Moreover, their Hausdorff dimensions are also determined, which generalizes an early work of Besicovitch. A In Section 4, we will introduce some special Moran sets and determine their Hausdorff dimension for ready use. The last section is devoted to the proof of Theorem \ref{theorem main theorem}, some further discussion are also presented there. 

\section{$p$-adic entropy function}
In this section, we will introduce the definitions of $p$-adic lower entropy function $\b{\it h}(\alpha)$ and $p$-adic upper entropy function $\bar{h}(\alpha)$, and then present some properties about the two functions and the $p$-adic entropy function $h(\alpha)$.

Let $0\leq\alpha\leq p-1$, $n\geq1$ and $\delta>0$. Denote  
\[\bar{H}(\alpha,n,\delta)=\left\{x_1x_2\cdots x_n\in A^n\colon\sum_{i=1}^nx_i<n(\alpha+\delta)\right\}\]
and $\bar{h}(\alpha,n,\delta)=\card\bar{H}(\alpha,n,\delta)$. Define the $p$-adic upper entropy function
\begin{align}\label{definition bar h}
\bar{h}(\alpha)=\lim_{\delta\to0}\varlimsup_{n\to\infty}\frac{\log\bar{h}(\alpha,n,\delta)}{(\log p)n}.
\end{align}
Oppositely, denote  
\[\b{\it H}(\alpha,n,\delta)=\left\{x_1x_2\cdots x_n\in A^n\colon\sum_{i=1}^nx_i>n(\alpha-\delta)\right\}\]
and $\b{\it h}(\alpha,n,\delta)=\card\b{\it H}(\alpha,n,\delta)$. Define the $p$-adic lower entropy function
\begin{align}\label{definition b h}
  \b{\it h}(\alpha)=\lim_{\delta\to0}\varliminf_{n\to\infty}\frac{\log\b{\it h}(\alpha,n,\delta)}{(\log p)n}.
\end{align}
Note that we have $0\leq\b{\it h}(\alpha),\bar{h}(\alpha)\leq1$ and the limits in \eqref{definition bar h} and \eqref{definition b h} both exist since $\bar{h}(\alpha,n,\delta)$ and $\b{\it h}(\alpha,n,\delta)$ are increasing for $\delta>0$. Moreover, for the two functions $\b{\it h}(\alpha)$ and $\bar{h}(\alpha)$, we have the following relation between them.
\begin{thm}\label{theorem barh bh}
	$\bar{h}(p-1-\alpha)=\b{\it h}(\alpha)$.
\end{thm}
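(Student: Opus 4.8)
The plan is to exhibit an explicit digit‑complementation bijection between the two families of admissible words and then pass to the limit. For each $n\geq1$ consider the involution $\iota\colon A^n\to A^n$ given by $\iota(x_1x_2\cdots x_n)=y_1y_2\cdots y_n$ with $y_i=p-1-x_i$; since $0\leq x_i\leq p-1$ we have $y_i\in A$, so $\iota$ is a well-defined bijection of $A^n$. Because $\sum_{i=1}^n y_i=n(p-1)-\sum_{i=1}^n x_i$, the condition $\sum_{i=1}^n x_i<n(\alpha+\delta)$ is \emph{equivalent} to $\sum_{i=1}^n y_i>n(p-1)-n(\alpha+\delta)=n\big((p-1-\alpha)-\delta\big)$. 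Hence $\iota$ restricts to a bijection $\bar{H}(\alpha,n,\delta)\to\b{\it H}(p-1-\alpha,n,\delta)$, and therefore
\[\bar{h}(\alpha,n,\delta)=\b{\it h}(p-1-\alpha,n,\delta)\qquad\text{for all }n\geq1,\ \delta>0.\]

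The remaining point is that \eqref{definition bar h} is defined with a $\varlimsup$ in $n$ while \eqref{definition b h} uses a $\varliminf$; I would close this gap by showing that for fixed $\delta>0$ the limit in $n$ actually exists. This follows from supermultiplicativity: concatenating a word of $\b{\it H}(\alpha',m,\delta)$ with a word of $\b{\it H}(\alpha',n,\delta)$ yields a word of $\b{\it H}(\alpha',m+n,\delta)$ (the digit sum is additive under concatenation and $m(\alpha'-\delta)+n(\alpha'-\delta)=(m+n)(\alpha'-\delta)$), and distinct pairs give distinct concatenations, so $\b{\it h}(\alpha',m,\delta)\,\b{\it h}(\alpha',n,\delta)\leq\b{\it h}(\alpha',m+n,\delta)$. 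Since the all-$(p-1)$ word always qualifies, $1\leq\b{\it h}(\alpha',n,\delta)\leq p^n$, so $\log\b{\it h}(\alpha',n,\delta)$ is superadditive and bounded above by $n\log p$; by Fekete's lemma $\lim_{n\to\infty}\frac{\log\b{\it h}(\alpha',n,\delta)}{(\log p)n}$ exists and equals the supremum. (The same argument applies verbatim to $\bar{h}(\alpha,n,\delta)$, using the all-zero word, which is a reassuring consistency check.)

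Combining the two steps, for every $\delta>0$ we get
\[\varlimsup_{n\to\infty}\frac{\log\bar{h}(\alpha,n,\delta)}{(\log p)n}=\varlimsup_{n\to\infty}\frac{\log\b{\it h}(p-1-\alpha,n,\delta)}{(\log p)n}=\varliminf_{n\to\infty}\frac{\log\b{\it h}(p-1-\alpha,n,\delta)}{(\log p)n},\]
and letting $\delta\to0$ yields $\bar{h}(\alpha)=\b{\it h}(p-1-\alpha)$, which is the claim (and since $\iota$ is an involution, one obtains the symmetric reading $\b{\it h}(p-1-\alpha)=\bar{h}(\alpha)$ with no extra work). The bijection is entirely routine; the only genuine subtlety is the $\varlimsup$/$\varliminf$ bookkeeping, and I expect the Fekete‑type existence of the $n$-limit to be the one place where a short argument is actually needed.
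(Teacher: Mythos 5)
Your proof is correct and rests on the same digit-complementation bijection $x_i\mapsto p-1-x_i$ between $\bar{H}(\alpha,n,\delta)$ and $\b{\it H}(p-1-\alpha,n,\delta)$ that the paper uses. The only difference is that you additionally justify, via superadditivity of $\log\b{\it h}(\cdot,n,\delta)$ and Fekete's lemma, that the limit in $n$ exists for fixed $\delta$, thereby reconciling the $\varlimsup$ in the definition of $\bar{h}$ with the $\varliminf$ in the definition of $\b{\it h}$ --- a point the paper's one-line proof passes over silently; this extra step is sound and strengthens the argument.
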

\begin{proof}
For any $n$-word $x_1\cdots x_n\in A^n$, it is easy to check that the word $(p-1-x_1)\cdots(p-1-x_n)\in\b{\it H}(\alpha)$ if and only if the word $x_1\cdots x_n\in\bar{H}(p-1-\alpha)$. So, there is a one-to-one corresponding between the two sets $\b{\it H}(\alpha)$ and $\bar{H}(p-1-\alpha)$.	
\end{proof}

Recall the definition of $p$-adic function $h(\alpha)$ in \eqref{definiton h alpha}. Actually, we may have
\begin{align}\label{definiton h alpha ==}
h(\alpha)=\lim_{\delta\to0}\varliminf_{n\to\infty}\frac{\log h(\alpha,n,\delta)}{(\log p)n}=\lim_{\delta\to0}\varlimsup_{n\to\infty}\frac{\log h(\alpha,n,\delta)}{(\log p)n} 
\end{align}
according to Proposition 4.2 in~\cite{TWWX}. Moreover, we can also discover the following properties of $p$-adic function.
 
\begin{thm}\label{theorem enumerate 1}
	For the function $h(\alpha)$ defined on $[0,p-1]$, we have
\noindent	
    \begin{enumerate}
    	\item\label{halpha 1} $h(0)=h(p-1)=0$;
		\item\label{halpha 2} $h(\alpha)$ is concave and continuous on $[0,p-1]$; 
		\item\label{halpha 3} $h(\alpha)$ is symmetric with respect to the line $\alpha=(p-1)/2$. That is, we have $h(\alpha)=h(p-1-\alpha)$ for any $0\leq\alpha\leq p-1$. It follows that $h(\alpha)$ is increasing on $[0,(p-1)/2]$ and decreasing on $[(p-1)/2,p-1]$;
		\item\label{halpha 4} If $0\leq\alpha\leq(p-1)/2$, then $\bar{h}(\alpha)=h(\alpha)$; if $(p-1)/2\leq\alpha\leq p-1$, then $\b{\it h}(\alpha)=h(\alpha)$.
	\end{enumerate}			
\end{thm}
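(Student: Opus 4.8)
The plan is to prove the four assertions roughly in the order \ref{halpha 1}, the symmetry in \ref{halpha 3}, then \ref{halpha 2}, then the monotonicity in \ref{halpha 3}, then \ref{halpha 4}, since each step feeds the next. For \ref{halpha 1} I would argue directly from cardinalities: here $H(0,n,\delta)=\{x_1\cdots x_n\in A^n\colon\sum_{i=1}^n x_i<n\delta\}$, the lower inequality being automatic, and since each nonzero digit contributes at least $1$, at most $\lfloor n\delta\rfloor$ of the digits are nonzero, so $h(0,n,\delta)\le(n+1)\binom{n}{\lfloor n\delta\rfloor}(p-1)^{\lfloor n\delta\rfloor}$ once $\delta<1/2$. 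Applying $\frac{1}{(\log p)n}\log(\cdot)$, then $n\to\infty$, then $\delta\to0$ drives the right-hand side to $0$, so $h(0)=0$; and $h(p-1)=0$ will follow from the symmetry in \ref{halpha 3}. That symmetry is obtained exactly as in the proof of Theorem~\ref{theorem barh bh}: the digit-complementation map $x_1\cdots x_n\mapsto(p-1-x_1)\cdots(p-1-x_n)$ is a bijection of $A^n$ sending $H(\alpha,n,\delta)$ onto $H(p-1-\alpha,n,\delta)$, because $\sum_i(p-1-x_i)=n(p-1)-\sum_i x_i$; thus $h(\alpha,n,\delta)=h(p-1-\alpha,n,\delta)$ for all $n$ and $\delta$, whence $h(\alpha)=h(p-1-\alpha)$.

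The heart of \ref{halpha 2} is concavity, which I would obtain by concatenation. Fix $0\le\alpha,\alpha'\le p-1$, a rational $\lambda=a/(a+b)$ with $a,b\ge1$, and put $n=(a+b)\ell$; then for any $u\in H(\alpha,a\ell,\delta)$ and $v\in H(\alpha',b\ell,\delta)$ the concatenation $uv$ has length $n$ and digit sum in $\bigl(n(\lambda\alpha+(1-\lambda)\alpha'-\delta),\,n(\lambda\alpha+(1-\lambda)\alpha'+\delta)\bigr)$, and distinct pairs $(u,v)$ give distinct words, so $h(\lambda\alpha+(1-\lambda)\alpha',n,\delta)\ge h(\alpha,a\ell,\delta)\,h(\alpha',b\ell,\delta)$. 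Applying $\frac{1}{(\log p)n}\log(\cdot)$ along the subsequence $n=(a+b)\ell$, taking $\varlimsup$ in $\ell$ on the left (which is at most the full $\varlimsup$ in $n$) and bounding the right side below by $\lambda\varliminf_m\frac{\log h(\alpha,m,\delta)}{(\log p)m}+(1-\lambda)\varliminf_m\frac{\log h(\alpha',m,\delta)}{(\log p)m}$ (using that a subsequential $\varliminf$ is at least the full one and that $\varlimsup(u+v)\ge\varliminf u+\varliminf v$), and finally letting $\delta\to0$ and invoking \eqref{definiton h alpha ==}, one gets $h(\lambda\alpha+(1-\lambda)\alpha')\ge\lambda h(\alpha)+(1-\lambda)h(\alpha')$ for every rational $\lambda\in[0,1]$, in particular midpoint concavity. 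Since $0\le h\le1$ is bounded on $[0,p-1]$, the classical fact that a bounded midpoint-concave function on an interval is concave shows $h$ is concave on $[0,p-1]$; concavity gives continuity on $(0,p-1)$, and continuity at the endpoints follows from $h(0)=h(p-1)=0$ together with the analogous explicit cardinality bound (as in \ref{halpha 1}), which yields $h(\alpha)\to0$ as $\alpha\to0^+$ and, by symmetry, as $\alpha\to(p-1)^-$. The monotonicity in \ref{halpha 3} is then immediate, since a concave function on $[0,p-1]$ symmetric about $(p-1)/2$ is nondecreasing on $[0,(p-1)/2]$ and nonincreasing on $[(p-1)/2,p-1]$.

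For \ref{halpha 4}, take $0\le\alpha\le(p-1)/2$. Since $H(\alpha,n,\delta)\subseteq\bar H(\alpha,n,\delta)$ we get $h(\alpha)\le\bar h(\alpha)$ for free, so only the reverse inequality needs work. Writing $N(n,k)=\card\{x_1\cdots x_n\in A^n\colon\sum_i x_i=k\}$, the sequence $(N(n,k))_{k}$ is the $n$-fold convolution of the log-concave sequence $(1,\dots,1)$ of length $p$, hence is log-concave and palindromic, and therefore unimodal with maximum at $k=\lfloor n(p-1)/2\rfloor$. Consequently, for $\delta$ small, $\max_{0\le k<n(\alpha+\delta)}N(n,k)$ is attained at the largest admissible $k$ (near $n(\alpha+\delta)$ when $\alpha<(p-1)/2$, or the global maximum when $\alpha=(p-1)/2$), and in either case this value is at most $h(\alpha+\delta,n,\delta')$ for any fixed $\delta'>0$ and all large $n$. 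Since $\bar h(\alpha,n,\delta)=\sum_{k<n(\alpha+\delta)}N(n,k)$ is at most $n+1$ times that maximum, applying $\frac{1}{(\log p)n}\log(\cdot)$, then $\varlimsup$ in $n$, then $\delta'\to0$, then $\delta\to0$ (using the continuity of $h$ from \ref{halpha 2}) gives $\bar h(\alpha)\le h(\alpha)$. Finally, for $(p-1)/2\le\alpha\le p-1$ apply the case just proved to $p-1-\alpha\le(p-1)/2$ and combine with Theorem~\ref{theorem barh bh} (which reads $\b{\it h}(\alpha)=\bar h(p-1-\alpha)$) and the symmetry $h(\alpha)=h(p-1-\alpha)$ to obtain $\b{\it h}(\alpha)=h(\alpha)$.

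I expect the concavity step in \ref{halpha 2} to be the main obstacle: one must manipulate two nested limits (in $n$ and in $\delta$) together with the auxiliary subsequence $n=(a+b)\ell$, and it is exactly here that the two-sided identity \eqref{definiton h alpha ==}---which lets the $\varliminf$- and $\varlimsup$-forms of $h$ be used interchangeably---is indispensable. A secondary subtlety is the unimodality (log-concavity) input needed in \ref{halpha 4}; the rest is bookkeeping with cardinalities, concatenations, and the complementation bijection.
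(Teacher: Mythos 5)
Your proposal is correct and follows essentially the same route as the paper's proof: concatenation superadditivity for rational concavity, the digit-complementation bijection for symmetry, binomial/Stirling-type counting for $h(0)=h(p-1)=0$ and for continuity at the endpoints, and, for part (4), comparison of $\bar{h}(\alpha,n,\delta)$ with a single band near $\alpha$ up to a polynomial factor. The only deviations are cosmetic: you invoke the classical bounded-midpoint-concave fact where the paper proves upper semicontinuity, you make explicit the log-concavity/unimodality of the digit-sum counts that underlies the counting inequality the paper states without proof, and you deduce the second half of (4) from the first via Theorem~\ref{theorem barh bh} and symmetry instead of repeating the argument.
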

\begin{proof}
(1) The conclusion $h(0)=0$	is followed by the the estimation
\begin{align}\label{inequality h zero}
   h(0)\leq\lim_{\delta\to0}\varliminf_{n\to\infty}\frac{\log\frac{n^{[n\delta]}}{[n\delta]!}}{(\log p)n}\leq\lim_{\delta\to0}\varliminf_{n\to\infty}\frac{\log\frac{n^{n\delta}}{(n\delta-1)!}}{(\log p)n}		=\lim_{\delta\to0}\frac{\delta\log\frac{e}{\delta}}{\log p}=0.
\end{align}
Here, the third equality is followed by the well-known Stirling's approximation and $y!=y(y-1)\cdots(y-[y])$ if $y>0$. The other conclusion $h(p-1)=0$ can be deduced similarly.
	
(2) Let $m\geq1$ and take $m$ $n$-words $X_1,X_2,\ldots,X_m\subset H(\alpha,n,\delta)$. It is obvious that the concatenation of these words satisfies $X_1X_2\cdots X_m\subset H(\alpha,nm,\delta)$. Thus, \[\big(h(\alpha,n,\delta)\big)^m\leq h(\alpha,nm,\delta).\] 
Let $\alpha,\beta\in (0,p-1)$ and $s$, $t$ be two positive integers. Then
\[\big(h(\alpha,n,\delta)\big)^s\big(h(\beta,n,\delta)\big)^t\leq h(\alpha,ns,\delta)h(\beta,nt,\delta)\leq h\left(\frac{s\alpha+t\beta}{p+q},n(s+t),\delta\right).\]
Hence, 
\[\frac{s}{s+t}h(\alpha)+\frac{t}{s+t}h(\beta)\leq h\left(\frac{s}{s+t}\alpha+\frac{t}{s+t}\beta\right).\]
Since $s$ and $t$ are two arbitrary positive integers, we have
\[\lambda h(\alpha)+(1-\lambda)h(\beta)\leq h(\lambda\alpha+(1-\lambda)\beta)\]	
for any $0<\lambda<1$. It means that the function $h(\alpha)$ is rational concavity.	

By the definition of $h(\alpha)$, for any $\eta>0$, there exists $\delta_0>0$ such that
\begin{align}\label{inequality h alpha}
\varlimsup_{n\to\infty}\frac{\log h(\alpha,n,\delta)}{(\log p)n}<h(\alpha)+\frac{\eta}{2}	
\end{align} 
for any $0<\delta<\delta_0$.  
Take a number $\gamma$ which satisfies $|\alpha-\gamma|<\delta/2$. Then, we have $H(\gamma,n,\delta/2)\subset H(\alpha,n,\delta)$. It yields that $h(\gamma,n,\delta/2)\leq h(\alpha,n,\delta)$. Moreover, by the definition of $h(\gamma)$, there exists some $\delta_1$ satisfying $0<\delta_1<\delta_0$ such that 
\[h(\gamma)\leq \varlimsup_{n\to\infty}\frac{\log h(\gamma,n,\delta/2)}{(\log p)n}+\frac{\eta}{2}\leq\varlimsup_{n\to\infty}\frac{\log h(\alpha,n,\delta)}{(\log p)n}+\frac{\eta}{2}.\]
for any $0<\delta<\delta_1$. This, together with \eqref{inequality h alpha}, yields that
$h(\gamma)<h(\alpha)+\eta$ if $|\alpha-\gamma|<\delta/2$, where  $0<\delta<\delta_1$. 
It implies that the function $h(\alpha)$ is upper semi-continuous. So, $h(\alpha)$ is concave and then continuous on $(0,p-1)$. 

Next, we show that $h(\alpha)$ is continuous at $\alpha=0$ and $\alpha=p-1$. Since
$h(\alpha,n,\delta)\leq\bar{h}(\alpha,n,\delta)$, similar to the estimation \eqref{inequality h zero} we have
\begin{align*} 
  \lim_{\alpha\to0^+}h(\alpha)&\leq\lim_{\alpha\to0^+}\lim_{\delta\to0}\varliminf_{n\to\infty}\frac{\log\big(n^{[n(\alpha+\delta)]}/[n(\alpha+\delta)]!\big)}{(\log p)n}\\
  &\leq\lim_{\alpha\to0^+}\lim_{\delta\to0}\frac{(\alpha+\delta)\log\frac{e}{(\alpha+\delta)}}{\log p}=\lim_{\alpha\to0^+}\frac{\alpha\log\frac{e}{\alpha}}{\log p}=0.  
\end{align*} 
It follows that $\lim_{\alpha\to0^+}h(\alpha)=0=h(0)$. Thus, $h(\alpha)$ is continuous at $\alpha=0$. Similarly, the continuity of $h(\alpha)$ at $\alpha=p-1$ holds as well. Thus, $h(\alpha)$ is concave and continuous on $[0,p-1]$. 

(3) The proof of the property $h(\alpha)=h(p-1-\alpha)$ is similar to the discussion for Theorem~\ref{theorem barh bh}. The monotonicity of $h(\alpha)$ is followed by the concavity of $h(\alpha)$ in property \eqref{halpha 2}.

(4) The former part is followed by the increasing property of $h(\alpha)$ in \eqref{halpha 3} and the inequality \[h(\alpha,n,\delta)\leq\bar{h}(\alpha,n,\delta)\leq2\left(\left[\frac{\alpha+\delta}{2\delta}\right]+1\right)h(\alpha,n,\delta),\] 
where $\alpha\leq(p-1)/2$. The second part can be dealt with in a similar way. 		
\end{proof}

\section{Besicovitch sets}
In this section, we will determine the Hausdorff dimensions of Bsicovitch sets $\b{\it E}(\alpha)$ and $\bar{E}(\alpha)$ and present at last a further property of the $p$-adic entropy function for ready use.

Let $x\in I$. Denote by 
\[\b{\it A}(x)=\varliminf_{n\to\infty}\frac{S_n(x)}{n}\quad\text{and}\quad \bar{A}(x)=\varlimsup_{n\to\infty}\frac{S_n(x)}{n}\] 
the lower and upper digit means of $x$, respectively. Let $0\leq\alpha\leq p-1$. Define the level sets
\begin{align}\label{E alpha}
   \b{\it E}(\alpha)=\left\{x\in I\colon \b{\it A}(x)\geq\alpha\right\}\quad\text{and}\quad\bar{E}(\alpha)=\left\{x\in I\colon \bar{A}(x)\leq\alpha\right\},
\end{align}
which are called the \emph{Besicovitch sets} in this paper. Note that the Hausdorff dimensions of $\b{\it E}(\alpha)$ and $\bar{E}(\alpha)$ are determined by Besicovitch~\cite{Be} in the case of binary expansion. For the present general case, we have

\begin{thm}\label{theorem general}
	Let $0\leq\alpha\leq p-1$. Then
	\begin{equation}\label{equation E alpha 1} 
	\dim_H\b{\it E}(\alpha)=\begin{cases} 1,\indent &0\leq\alpha<(p-1)/2;\\
	h(\alpha),\indent &(p-1)/2\leq\alpha\leq p-1,\end{cases}
	\end{equation}	
	and
	\begin{equation}\label{equation E alpha 2} 
    \dim_H\bar{E}(\alpha)=\begin{cases} h(\alpha),\indent &0\leq\alpha<(p-1)/2;\\
    1,\indent &(p-1)/2\leq\alpha\leq p-1.\end{cases}
    \end{equation}
\end{thm}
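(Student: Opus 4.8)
\emph{Proof strategy.}\quad The plan is first to reduce the two displays to one. The isometry $x\mapsto 1-x$ of $I$, which away from the countably many $p$-adic rationals replaces the digit $x_n$ by $p-1-x_n$ and hence replaces $S_n(x)$ by $n(p-1)-S_n(x)$, carries $\b{\it E}(\alpha)$ onto $\bar{E}(p-1-\alpha)$ up to a countable set; since an isometry preserves Hausdorff dimension and $h(\alpha)=h(p-1-\alpha)$ by Theorem~\ref{theorem enumerate 1}(\ref{halpha 3}), formulas \eqref{equation E alpha 1} and \eqref{equation E alpha 2} are equivalent, so I will only prove \eqref{equation E alpha 1}. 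For $0\le\alpha<(p-1)/2$ the value $1$ is immediate: by the strong law of large numbers almost every $x\in I$ has $S_n(x)/n\to(p-1)/2\ge\alpha$, so $\b{\it E}(\alpha)$ has full Lebesgue measure.

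Now suppose $(p-1)/2\le\alpha\le p-1$. For the upper bound I would argue as follows. If $\b{\it A}(x)\ge\alpha$ then, for any fixed $\delta>0$, the prefix $x_1\cdots x_n$ lies in $\b{\it H}(\alpha,n,\delta)$ for all large $n$; hence $\b{\it E}(\alpha)$ is contained in a countable union of sets, each of which, for arbitrarily large $n$, is covered by the $\b{\it h}(\alpha,n,\delta)$ cylinders of length $p^{-n}$ determined by these prefixes. Evaluating the $s$-dimensional Hausdorff measure on these covers gives $\mathcal{H}^s(\b{\it E}(\alpha))=0$ whenever $s>\varliminf_{n\to\infty}\frac{\log\b{\it h}(\alpha,n,\delta)}{(\log p)n}$; letting $\delta\to0$ yields $\dim_H\b{\it E}(\alpha)\le\b{\it h}(\alpha)$, which equals $h(\alpha)$ in this range by Theorem~\ref{theorem enumerate 1}(\ref{halpha 4}).

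The substantive part is the matching lower bound $\dim_H\b{\it E}(\alpha)\ge h(\alpha)$. Since $\{x\in I\colon S_n(x)/n\to\alpha\}\subseteq\b{\it E}(\alpha)$, it suffices to give this level set dimension at least $h(\alpha)$, and I would do so by a Moran-type construction together with the mass distribution principle. Fix $s<h(\alpha)$. By \eqref{definiton h alpha ==}, for every $\delta>0$ there is $m(\delta)$ with $h(\alpha,n,\delta)\ge p^{ns}$ for all $n\ge m(\delta)$. Choose $\delta_k\downarrow0$, put $m_k=m(\delta_k)$, and build a Cantor set $F$ by concatenating blocks: a run of $r_1$ independent blocks, each ranging over all of $H(\alpha,m_1,\delta_1)$, then a run of $r_2$ blocks over $H(\alpha,m_2,\delta_2)$, and so on, where the repetition counts $r_k$ are taken so large that every run is negligibly short compared with all preceding material. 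Put on $F$ the probability measure $\nu$ under which the blocks are independent and uniform within their $H(\alpha,m_k,\delta_k)$. Then for each $x\in F$ the completed blocks make up a $1-o(1)$ fraction of the first $n$ digits and contribute $\alpha n+o(n)$ to $S_n(x)$, while the single partial block contributes only $o(n)$, so $S_n(x)/n\to\alpha$ and $F\subseteq\b{\it E}(\alpha)$. Moreover $\nu([x_1\cdots x_n])\le p^{-sP_n}$, where $P_n$ is the total length of the completed blocks, and $P_n/n\to1$, so $\varliminf_{n\to\infty}\frac{\log\nu([x_1\cdots x_n])}{-(\log p)n}\ge s$ at every point of $F$; the mass distribution principle then yields $\dim_HF\ge s$, and letting $s\uparrow h(\alpha)$ finishes.

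The main difficulty lies precisely in this construction: the repetition counts $r_k$ must be tuned so that the digit mean converges to $\alpha$ \emph{and} the estimate $\nu([x_1\cdots x_n])\le p^{-sn+o(n)}$ holds at every scale $n$, not merely at the block endpoints, and the troublesome term is always the partially completed current block, whose digit sum is entirely unconstrained --- this is the reason each length $m_k$ must be reused many times rather than simply increased with $k$. The remaining points (passing from cylinders to balls $B(x,r)$, and the precise form of Billingsley's lemma invoked in the mass distribution step) are routine.
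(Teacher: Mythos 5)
Your proposal is correct and follows essentially the same route as the paper: the trivial half via the strong law of large numbers, the upper bound by covering with the cylinders counted by $\b{\it h}(\alpha,n,\delta)$ (resp.\ $\bar h$) and invoking Theorem~\ref{theorem enumerate 1}(\ref{halpha 4}), and the lower bound by concatenating blocks drawn from $H(\alpha,m_k,\delta_k)$ with repetition counts chosen so that the next block length is negligible compared with the accumulated length --- which is exactly the paper's Moran set $\mathcal{F}(\alpha)$. The only differences are in packaging: you deduce \eqref{equation E alpha 2} from \eqref{equation E alpha 1} via the reflection $x\mapsto 1-x$ instead of repeating the argument (note that at the boundary point $\alpha=(p-1)/2$ this transfer needs $h\big((p-1)/2\big)=1$ or, more simply, the direct full-measure argument, both of which are available), and you bound the dimension of the construction by an explicit measure and the mass distribution principle where the paper cites the Feng--Wen--Wu homogeneous Moran set lemma (Lemma~\ref{lemma FWW}).
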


To prove this theorem, we would like to introduce a lemma about the Hausdorff dimensions of homogeneous Moran sets. Here, It is assumed the readers are familiar with the definition and structure of homogeneous Moran sets, for which one can see \cite{FWW} for more details. 

Let $\{N_k\}_{k\geq1}$ be a sequence of integers and $\{c_k\}_{k\geq1}$ be a sequence of positive numbers satisfying $N_k\geq2$, $0<c_k<1$ and $N_kc_k\leq1$. Let $\mathcal{M}=\mathcal{M}\big(I,\{N_k\}_{k\geq1},\{c_k\}_{k\geq1}\big)$ be the homogeneous Moran set determined by the sequences $\{N_k\}_{k\geq1}$ and $\{c_k\}_{k\geq1}$. Denote
\[s=\varliminf_{k\to\infty}\frac{\log(N_1N_2\cdots N_k)}{-\log(c_1c_2\cdots c_{k+1}N_{k+1})}.\]
Then we have
\begin{lem}[See Theorem 2.1 and Corollary 2.1 in \cite{FWW}]\label{lemma FWW}
	Let $\mathcal{M}$ be a homogeneous Moran set, then $\dim_H\mathcal{M}\geq s$. Moreover, if $\inf_{k\geq1}c_k>0$, then $\dim_H\mathcal{M}=s$.
\end{lem}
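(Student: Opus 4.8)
The plan is to prove the two assertions separately: the universal lower bound $\dim_H\mathcal{M}\ge s$ by the mass distribution principle, and the matching upper bound $\dim_H\mathcal{M}\le s$ (needing the hypothesis $\inf_k c_k>0$) by exhibiting an economical covering. Throughout I write $\delta_k=c_1c_2\cdots c_k$ for the common length of a level-$k$ basic interval (normalising $|I|=1$), so that each level-$k$ interval contains $N_{k+1}$ pairwise disjoint level-$(k+1)$ children, each of length $\delta_{k+1}=\delta_k c_{k+1}$, and note that $N_k\ge2$ together with $N_kc_k\le1$ forces $c_k\le1/2$, hence $\delta_k\le2^{-k}\to0$. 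The natural candidate for a Frostman measure is the uniform mass distribution $\mu$ that assigns mass $(N_1N_2\cdots N_k)^{-1}$ to each of the $N_1\cdots N_k$ level-$k$ intervals; the consistency of these assignments across levels is immediate, so $\mu$ is a well-defined Borel probability measure supported on $\mathcal{M}$.

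For the lower bound I would fix any $s'<s$ and estimate $\mu(B)/|B|^{s'}$ for an arbitrary interval $B$ of small length. The key geometric observation is that, although the gaps between basic intervals are unconstrained, the basic intervals at a fixed level $m$ are pairwise disjoint and of equal length $\delta_m$, so a window of length $|B|$ meets at most $|B|/\delta_m+2$ of them, and at most two when $|B|<\delta_m$. I would then decompose the scales by choosing, for each $B$, the index $k$ with $\delta_{k+1}\le|B|<\delta_k$ and splitting into the two regimes $\delta_{k+1}N_{k+1}\le|B|<\delta_k$ and $\delta_{k+1}\le|B|<\delta_{k+1}N_{k+1}$, which together tile every scale since $c_{k+1}N_{k+1}\le1$. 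In the first regime $B$ meets at most two level-$k$ intervals, giving $\mu(B)\le2(N_1\cdots N_k)^{-1}$; in the second regime $B$ meets at most $3|B|\,\delta_{k+1}^{-1}$ level-$(k+1)$ intervals, giving $\mu(B)\le3|B|\,\delta_{k+1}^{-1}(N_1\cdots N_{k+1})^{-1}$. In both regimes, inserting the governing scale constraint on $|B|$ and using the definition of $s$ as a liminf — which forces $N_1\cdots N_k\ge(c_1\cdots c_{k+1}N_{k+1})^{-s'}$ for all large $k$ — reduces the estimate to $\mu(B)\le C|B|^{s'}$. The mass distribution principle then gives $\mathcal{H}^{s'}(\mathcal{M})>0$, hence $\dim_H\mathcal{M}\ge s'$, and letting $s'\uparrow s$ finishes the lower bound with no hypothesis on the $c_k$.

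For the upper bound I would invoke $\inf_k c_k=:c_0>0$. Since $c_{k+1}N_{k+1}\in(c_0,1]$, one level-$k$ interval can be covered by at most $\lceil1/(c_{k+1}N_{k+1})\rceil\le1/c_0+1$ intervals of length $\delta_{k+1}N_{k+1}$. Applying this to every level-$k$ interval produces a cover of $\mathcal{M}$ by at most $(1/c_0+1)N_1\cdots N_k$ intervals of common length $\delta_{k+1}N_{k+1}=c_1\cdots c_{k+1}N_{k+1}$, so that $\sum|U|^t\le(1/c_0+1)N_1\cdots N_k\,(c_1\cdots c_{k+1}N_{k+1})^t$ for every exponent $t$. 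Evaluating this along a subsequence $(k_j)$ realising the liminf that defines $s$, for any $t>s$ I would compare with an intermediate exponent $s<t'<t$: the liminf forces $N_1\cdots N_{k_j}(c_1\cdots c_{k_j+1}N_{k_j+1})^{t'}\le1$ for large $j$, and since the mesh $c_1\cdots c_{k_j+1}N_{k_j+1}\le2^{-k_j}\to0$, the remaining factor $(c_1\cdots c_{k_j+1}N_{k_j+1})^{t-t'}$ drives $\sum|U|^t\to0$. Hence $\mathcal{H}^t(\mathcal{M})=0$ and $\dim_H\mathcal{M}\le t$ for every $t>s$, so $\dim_H\mathcal{M}\le s$.

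The routine parts are the verification that $\mu$ is well defined and the elementary counting of how many equal-length disjoint intervals meet a given window. \textbf{The main obstacle is the lower bound}: because the positions of the children inside a parent are arbitrary, no single comparison scale works, and the delicate point is to choose the correct scale decomposition — splitting at $|B|=\delta_{k+1}N_{k+1}$ rather than at the naive level boundaries — so that the extra factor $N_{k+1}$ in the denominator of $s$ is exactly absorbed. I expect the bookkeeping that matches the two regimes to the defining liminf to be where the argument must be carried out most carefully.
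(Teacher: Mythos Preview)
The paper does not supply its own proof of this lemma: it is quoted verbatim as Theorem~2.1 and Corollary~2.1 of Feng--Wen--Wu~\cite{FWW} and used as a black box, so there is no in-paper argument to compare against. Your sketch is the standard proof (and indeed is essentially the argument in the cited reference): the uniform mass distribution together with the scale splitting at $|B|=\delta_{k+1}N_{k+1}$ for the lower bound, and an economical cover at mesh $\delta_{k+1}N_{k+1}$ for the upper bound under $\inf_k c_k>0$. Both halves are correct as outlined; the one place to be slightly more careful in a full write-up is the claim ``$B$ meets at most $3|B|/\delta_{k+1}$ level-$(k+1)$ intervals'' in Regime~2, which needs $|B|\ge\delta_{k+1}$ (which you have) to absorb the additive $+2$, and the implicit use of $s\le1$ (which follows from $N_kc_k\le1$) when bounding $|B|^{1-s'}$.
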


\begin{proof}[Proof of Theorem \ref{theorem general}]
We will give only the proof of \eqref{equation E alpha 2} since the conclusion \eqref{equation E alpha 1} can be dealt with in a similar way. 

For the second part of \eqref{equation E alpha 2}, it is obvious that 
$\{x\in I\colon\lim_{n\to\infty}S_n(x)/n=(p-1)/2\}\subset\bar{E}(\alpha)$. Since $\lim_{n\to\infty}S_n(x)/n=(p-1)/2$ for almost all $x\in I$ by the ergodic theorem, we have
\[\dim_H\bar{E}(\alpha)\geq\dim_H\left\{x\in I\colon\lim_{n\to\infty}\frac{S_n(x)}{n}=\frac{p-1}{2}\right\}=1.\]
It follows that $\dim_H\bar{E}(\alpha)=1$ as $(p-1)/2\leq\alpha\leq p-1$.

For the first part of \eqref{equation E alpha 2}, first we will show the upper bound of Hausdorff dimension of $\bar{E}(\alpha)$ is $h(\alpha)$ as $0\leq\alpha<(p-1)/2$. For any $\delta>0$, we have
\[\bar{E}(\alpha)\subset\bigcap_{l=1}^\infty\bigcup_{n=l}^\infty\bigcup_{x_1\cdots x_n\in\bar{H}(\alpha,n,\delta)}I(x_1\cdots x_n),\]
where the cylinder $I(x_1\cdots x_n)=\{y=0.y_1y_2\ldots\in I\colon y_1=x_1,\ldots,y_n=x_n\}$. By the definition of $\bar{h}(\alpha)$, for any $\eta>0$, there exists an integer $N$ such that 
\[\bar{h}(\alpha,n,\delta)<p^{n\left(\bar{h}(\alpha)+\frac{\eta}{2}\right)},\quad\forall n>N.\]	
Then, for any $l>N$, the $(\bar{h}(\alpha)+\eta)$-Hausdorff measure of $\bar{E}(\alpha)$ satisfies
\[\mathbb{H}_{p^{-l}}^{\bar{h}(\alpha)+\eta}\big(\bar{E}(\alpha)\big)\leq\sum_{n=l}^{\infty}\bar{h}(\alpha,n,\delta)(p^{-n})^s<\sum_{n=l}^{\infty}(p^{-\frac{\eta}{2}})^n<\infty.\]	
This implies that $\dim_H\bar{E}(\alpha)\leq\bar{h}(\alpha)+\eta$. Thus, $\dim_H\bar{E}(\alpha)\leq\bar{h}(\alpha)=h(\alpha)$ by the arbitrariness of $\eta$ and \eqref{halpha 4} of Theorem \ref{theorem enumerate 1}.

Next, we turn to show that the lower bound of the Hausdorff dimension of $\bar{E}(\alpha)$ is $h(\alpha)$. For this, we will prove $\dim_H\bar{E}(\alpha)\geq\tau$ for any $0<\tau<h(\alpha)$.

Since $\tau<h(\alpha)$, we can take two sequences, one is an increasing integer sequence $\{n_j\}_{j\geq1}$ and the other is a decreasing positive sequence $\{\delta_j\}_{j\geq1}$ satisfying $\lim_{j\to\infty}\delta_j=0$, such that
\[h(\alpha,n_j,\delta_j)>p^{n_j\tau}.\]
Let $j\geq1$ and write
\[F_j(\alpha)=\left\{x_1x_2\cdots x_{n_j}\in A^{n_j}\colon\left|\frac{\sum_{i=1}^{n_j}x_i}{n_j}-\alpha\right|<\delta_j\right\}.\]
Take a positive integer sequence $\{m_i\}_{i\geq1}$ satisfies
\[\lim_{j\to\infty}\frac{n_{j+1}}{\sum_{i=1}^{j}m_in_i}=0.\] Denote by \[q_j=m_1n_1+m_2n_2+\ldots+m_jn_j,\quad j\geq1.\] 
Based on the sequence of sets $\{F_j(\alpha)\}_{j\geq1}$, construct the Moran set
\begin{align*}
	\begin{split}
		\mathcal{F}(\alpha)&=\left\{0.x_1x_2\ldots\in I\colon x_{q_i+1}\cdots x_{q_{i+1}}\in F_i(\alpha)^{m_i},\forall i\geq1\right\}\\
		&=:0.\prod_{i=1}^{\infty}F_i(\alpha)^{m_i}.
	\end{split}
\end{align*}	
Here and in the sequel, if $F$ is a set of words with equal length and $m$ is a positive integer, then we use the notation $F^m$ to denote the set in which every word is the concatenations of $m$ words in the set $F$, and $F^\infty$ the set in which every sequence is the concatenations of infinite words in the set $F$. For the sequence of sets of words $\{F_i\}_{i\geq1}$,   $\prod_{i=1}^\infty F_i$ denotes the set in which every sequence is the successive concatenations of words in the set $F_i$ according to the order of natural numbers. 

It is easy to see that
\[\frac{m_1n_1(\alpha-\delta_1)+\cdots+m_jn_j(\alpha-\delta_j)}{m_1n_1+\cdots+m_jn_j}\leq\frac{S_{q_j}(x)}{q_j}\leq\frac{m_1n_1(\alpha+\delta_1)+\cdots+m_jn_j(\alpha+\delta_j)}{m_1n_1+\cdots+m_jn_j}.\]
Since $m_jn_j\to\infty$ and $\delta_j\to0$ as $j\to\infty$, we have
\[\lim_{j\to\infty}\frac{S_{q_j}(x)}{q_j}=\alpha\]
by the squeeze theorem. This implies the upper limit of $S_n(x)/n$ is $\alpha$. So, we have $\mathcal{F}(\alpha)\subset\bar{E}(\alpha)$ and then $\dim_H\bar{E}(\alpha)\geq\dim_H\mathcal{F}(\alpha)$. 

For any integer $n$ large enough, there exist two integers $j\geq1$ and $b$ such that 
\[0\leq b<m_{k+1}\quad\text{and}\quad\sum_{i=1}^jm_in_i+bn_{j+1}\leq n<\sum_{i=1}^jm_in_i+(b+1)n_{j+1}.\]
Then, by the first assertion of Lemma \ref{lemma FWW} we have
\[\dim_H\mathcal{F}(\alpha)\geq\varliminf_{j\to\infty}\frac{\big(\sum_{i=1}^jm_in_i+bn_{j+1}\big)\tau\log p}{\big(\sum_{i=1}^jm_in_i+(b+1)n_{j+1}\big)\log p-n_{j+1}\tau\log p}=\tau.\]
Thus, we obtain that $\dim_H\bar{E}(\alpha)\geq\tau$, which shows the first part of \eqref{equation E alpha 2}.

The proof is completed now.	
\end{proof}

Moreover, denote by 
\[A(x)=\lim_{n\to\infty}\frac{S_n(x)}{n},\quad x\in I,\] 
\emph{the arithmetic digit mean} of $x$ if the limit exists and define the level set related to it as
\begin{align}\label{E = alpha}
	E(\alpha)=\left\{x\in I\colon A(x)=\alpha\right\}.
\end{align}
Then, by the same technique used in the proof of Theorem~\ref{theorem general}, we may get

\begin{thm}\label{theorem = general}
	For any $0\leq\alpha\leq p-1$, we have that $\dim_HE(\alpha)=h(\alpha)$.
\end{thm}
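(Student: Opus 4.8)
The plan is to adapt the two-sided argument of Theorem \ref{theorem general} to the set $E(\alpha)$ where the limit of $S_n(x)/n$ exists and equals $\alpha$. First I would establish the upper bound $\dim_H E(\alpha)\le h(\alpha)$. Since $E(\alpha)\subset H(\alpha)$-type covers, namely $E(\alpha)\subset\bigcap_{l=1}^\infty\bigcup_{n=l}^\infty\bigcup_{x_1\cdots x_n\in H(\alpha,n,\delta)}I(x_1\cdots x_n)$ for every $\delta>0$, one uses the definition of $h(\alpha)$ via \eqref{definiton h alpha ==} to get $h(\alpha,n,\delta)<p^{n(h(\alpha)+\eta/2)}$ for all large $n$, and then bounds the $(h(\alpha)+\eta)$-Hausdorff measure by a convergent geometric series $\sum_{n=l}^\infty h(\alpha,n,\delta)(p^{-n})^{h(\alpha)+\eta}<\sum_{n=l}^\infty(p^{-\eta/2})^n<\infty$. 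Letting $l\to\infty$ and then $\eta\to0$ yields the upper bound. This step is essentially identical to the covering argument already carried out for $\bar E(\alpha)$.

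For the lower bound $\dim_H E(\alpha)\ge h(\alpha)$, I would reuse the Moran-set construction from the proof of Theorem \ref{theorem general} verbatim. Fix $0<\tau<h(\alpha)$, choose an increasing integer sequence $\{n_j\}$ and a decreasing null sequence $\{\delta_j\}$ with $h(\alpha,n_j,\delta_j)>p^{n_j\tau}$ (possible by \eqref{definiton h alpha ==}), set $F_j(\alpha)=\{x_1\cdots x_{n_j}\in A^{n_j}\colon |\sum_{i=1}^{n_j}x_i/n_j-\alpha|<\delta_j\}$, pick multiplicities $\{m_i\}$ with $n_{j+1}/\sum_{i=1}^j m_i n_i\to0$, and build $\mathcal F(\alpha)=0.\prod_{i=1}^\infty F_i(\alpha)^{m_i}$. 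The squeeze-theorem estimate on $S_{q_j}(x)/q_j$ shows this ratio tends to $\alpha$ along the subsequence $q_j$; the only extra point compared with Theorem \ref{theorem general} is that here one must control $S_n(x)/n$ for \emph{all} $n$, not merely get the right $\limsup$. But since within each block the digit sums are bounded by the block's near-$\alpha$ average, the usual interpolation between $q_{j-1}$ and $q_{j+1}$ shows $S_n(x)/n\to\alpha$ as $n\to\infty$, so $\mathcal F(\alpha)\subset E(\alpha)$. The Moran-dimension lower bound from Lemma \ref{lemma FWW} then gives $\dim_H\mathcal F(\alpha)\ge\tau$ by the same computation, and letting $\tau\uparrow h(\alpha)$ completes the argument.

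The main obstacle, such as it is, lies in verifying that the constructed points actually have $S_n(x)/n$ \emph{converging} to $\alpha$ rather than merely oscillating: one needs the partial sums over an incomplete final block to contribute negligibly, which is exactly what the growth condition $n_{j+1}/\sum_{i=1}^j m_i n_i\to0$ guarantees, since a partial block has length at most $m_{j+1}n_{j+1}$ and its digit-sum contribution is $O(m_{j+1}n_{j+1}(p-1))$ against a denominator of size at least $q_j$. Once this is checked, everything else is a direct transcription of the proof of Theorem \ref{theorem general}, with the single difference that no separate ``$(p-1)/2$ case'' arises here because $E(\alpha)$ requires the limit to equal $\alpha$ exactly — in particular, for $\alpha=(p-1)/2$ one recovers $\dim_H E((p-1)/2)=h((p-1)/2)=1$, consistent with the ergodic theorem.
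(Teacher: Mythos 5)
Your proposal is correct and takes essentially the approach the paper intends: the paper offers no separate argument for this theorem, saying only that it follows by the same technique as Theorem \ref{theorem general}, and that is exactly what you carry out (the $H(\alpha,n,\delta)$-cylinder covering for the upper bound and the Moran set $0.\prod_{i}F_i(\alpha)^{m_i}$ with Lemma \ref{lemma FWW} for the lower bound). One small slip in your final paragraph: the crude error term comes only from the incomplete final $F_{j+1}$-block, of length at most $n_{j+1}$ (not $m_{j+1}n_{j+1}$), since the complete $F_{j+1}$-blocks inside a partial super-block already have digit averages within $\delta_{j+1}$ of $\alpha$; with this reading, the condition $n_{j+1}/q_j\to0$ indeed yields $S_n(x)/n\to\alpha$ for all $n$, as your interpolation sentence correctly indicates.
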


\begin{cor}\label{corollary p-1 2}
  $h\big((p-1)/2\big)=1$.
\end{cor}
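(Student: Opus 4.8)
The plan is to read the corollary straight off Theorem~\ref{theorem = general}, which identifies $\dim_H E(\alpha)$ with $h(\alpha)$ for every $\alpha\in[0,p-1]$. Specializing to $\alpha=(p-1)/2$, it is enough to prove that $\dim_H E\big((p-1)/2\big)=1$, i.e. that the level set $E\big((p-1)/2\big)=\{x\in I\colon A(x)=(p-1)/2\}$ has Hausdorff dimension one.

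For this I would invoke the same ergodic-theoretic fact already used in the proof of Theorem~\ref{theorem general}: with respect to Lebesgue measure on $I$, the digit sequence $(x_n)_{n\geq1}$ consists of independent random variables uniformly distributed on $A=\{0,1,\ldots,p-1\}$, each with mean $(p-1)/2$. Hence, by the strong law of large numbers (equivalently, Birkhoff's ergodic theorem applied to the shift $T$, which preserves and is ergodic for Lebesgue measure), $\lim_{n\to\infty}S_n(x)/n=(p-1)/2$ for Lebesgue-almost every $x\in I$. Thus $E\big((p-1)/2\big)$ has full Lebesgue measure, and therefore $\dim_H E\big((p-1)/2\big)=1$. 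Combining this with Theorem~\ref{theorem = general} gives $h\big((p-1)/2\big)=\dim_H E\big((p-1)/2\big)=1$.

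I do not anticipate any genuine obstacle here; the statement is essentially an immediate byproduct of the machinery already built. The one point perhaps worth a clarifying sentence is that the quantity $h\big((p-1)/2\big)$ is unambiguous because the $\varliminf$ and $\varlimsup$ forms of the entropy function agree, which is exactly what \eqref{definiton h alpha ==} records. As an alternative, fully self-contained route one could bypass Theorem~\ref{theorem = general} and bound $h\big((p-1)/2,n,\delta\big)$ from below directly: for each fixed $\delta>0$, the weak law of large numbers shows that the proportion of words $x_1\cdots x_n\in A^n$ with $|S_n/n-(p-1)/2|<\delta$ tends to $1$, so $h\big((p-1)/2,n,\delta\big)/p^n\to1$, whence $\log h\big((p-1)/2,n,\delta\big)/\big((\log p)n\big)\to1$; letting $\delta\to0$ then yields $h\big((p-1)/2\big)=1$ together with the trivial upper bound $h(\alpha)\leq1$.
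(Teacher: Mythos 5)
Your main argument is exactly the paper's proof: by the ergodic theorem (strong law of large numbers) $A(x)=(p-1)/2$ for Lebesgue-almost every $x$, so $E\big((p-1)/2\big)$ has full measure and Hausdorff dimension one, and Theorem~\ref{theorem = general} then gives $h\big((p-1)/2\big)=\dim_H E\big((p-1)/2\big)=1$. This is correct; your alternative direct counting bound on $h\big((p-1)/2,n,\delta\big)$ is a fine self-contained variant but not needed.
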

\begin{proof}
Since $A(x)=(p-1)/2$ for almost all $x\in I$, by Theorem~\ref{theorem = general} we have
\begin{align*}
	\begin{split}
		1=\dim_H\left\{x\in I\colon A(x)=\frac{p-1}{2}\right\} 
		=\dim_HE\left(\frac{p-1}{2}\right)=h\left(\frac{p-1}{2}\right). 
	\end{split}	
\end{align*}
It ends the proof.	
\end{proof}	

\begin{rem}\label{remark b alpha beta}
	By Corollary \ref{corollary p-1 2}, Theorem \ref{theorem main theorem} can be restated in details as follows: if $0\leq\alpha\leq\beta<(p-1)/2$, then $\dim_HB(\alpha,\beta)=h(\beta)$; if $0\leq\alpha\leq(p-1)/2\leq\beta\leq1$, then $\dim_HB(\alpha,\beta)=1$; if $(p-1)/2<\alpha\leq\beta\leq p-1$, then $\dim_HB(\alpha,\beta)=h(\alpha)$.
\end{rem}
	
\section{Some Moran sets}
In this section, we will introduce some Moran sets constructed by sets of words with bounded digit sums and then determine their Hausdorff dimensions. Based on them, we will construct suitable subsets to achieve the lower bound of Hausdorff dimension of $B(\alpha,\beta)$ in the last section.

Let $M\geq1$ be an integer. Take two integers $P$ and $Q$ satisfying $0\leq P\leq Q\leq (p-1)M$. Write
\[W([P,Q],M):=\left\{x_1x_2\cdots x_M\in A^M\colon P\leq\sum_{i=1}^Mx_i\leq Q\right\}.\]
Then define the Moran set 
\begin{align*}
    \mathcal {W}([P,Q],M)&:={}0.W([P,Q],M)^\infty.
\end{align*}
For the size of the set $\mathcal {W}([P,Q],M)$, by the second assertion in Lemma \ref{lemma FWW}, we can get immediately that 
\begin{lem}\label{lemma pqm} 
	Let $0\leq P\leq Q\leq (p-1)M$ and $M\geq1$. Then
	\[\dim_H\mathcal {W}([P,Q],M)=\frac{\log\card W([P,Q],M)}{(\log p)M}.\]
\end{lem}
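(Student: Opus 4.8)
The plan is to apply Lemma~\ref{lemma FWW} directly to the homogeneous Moran set $\mathcal{W}([P,Q],M)$. First I would recognize that this set has exactly the structure required: it is the homogeneous Moran set $\mathcal{M}\big(I,\{N_k\}_{k\geq1},\{c_k\}_{k\geq1}\big)$ with the constant choices $N_k=\card W([P,Q],M)$ and $c_k=p^{-M}$ for every $k\geq1$. One must only check the admissibility conditions of Lemma~\ref{lemma FWW}: we need $N_k\geq 2$, which holds because the hypothesis $0\leq P\leq Q\leq (p-1)M$ with $M\geq 1$ guarantees $W([P,Q],M)$ contains at least two distinct words (for instance, when $P=Q=0$ the only word is $0^M$, so strictly one should note that the interesting case has $Q>P$ or $0<Q$; in the degenerate singleton case the set $\mathcal{W}$ is a single point and the formula $\log 1/((\log p)M)=0$ still holds trivially). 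The conditions $0<c_k<1$ and $N_k c_k\leq 1$ are immediate since $N_k=\card W([P,Q],M)\leq p^M$.

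Next I would compute the exponent $s$ from the lemma. With the constant sequences, $N_1\cdots N_k=(\card W([P,Q],M))^k$ and $c_1\cdots c_{k+1}N_{k+1}=p^{-M(k+1)}\cdot\card W([P,Q],M)$, so
\[
s=\varliminf_{k\to\infty}\frac{k\log\card W([P,Q],M)}{M(k+1)\log p-\log\card W([P,Q],M)}=\frac{\log\card W([P,Q],M)}{(\log p)M},
\]
the limit being an honest limit since the expression is a ratio of linear-in-$k$ quantities. Finally, since $\inf_{k\geq1}c_k=p^{-M}>0$, the second assertion of Lemma~\ref{lemma FWW} gives $\dim_H\mathcal{W}([P,Q],M)=s$, which is exactly the claimed formula.

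I do not expect any genuine obstacle here: the result is a routine specialization of Lemma~\ref{lemma FWW} to constant Moran data. The only point deserving a word of care is the degenerate case where $W([P,Q],M)$ is a singleton (forcing $N_k=1$, violating $N_k\geq 2$), but then $\mathcal{W}([P,Q],M)$ is a single point of Hausdorff dimension $0$ and the right-hand side $\log 1/((\log p)M)$ also equals $0$, so the statement remains valid; in all other cases $\card W([P,Q],M)\geq 2$ and the lemma applies verbatim. Thus the proof is essentially one paragraph: identify the Moran data, verify the hypotheses, evaluate $s$.
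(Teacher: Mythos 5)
Your proposal is correct and is exactly the paper's route: the paper obtains the lemma ``immediately'' from the second assertion of Lemma~\ref{lemma FWW} with the constant Moran data $N_k=\card W([P,Q],M)$, $c_k=p^{-M}$, which is precisely the identification and computation of $s$ you carry out. Your explicit treatment of the degenerate singleton case ($N_k=1$) is a small extra care the paper omits, but it does not change the argument.
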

Here and in the sequel, if $P=Q$, then write $W([P,Q],M)$ as $W(P,M)$ and $\mathcal {W}([P,Q],M)$ as $\mathcal {W}(P,M)$ respectively for simplicity.

Let $\alpha$ be a real number and $0\leq\alpha\leq p-1$. Let $n\geq1$. Define the function
\[\omega(\alpha,n)=\card W([\alpha n],n).\]
Then the corresponding properties in the following lemma is evident.

\begin{lem}\label{theorem enumerate 2}
   Let $0\leq\alpha\leq p-1$ and $n\geq1$. Then 
\begin{enumerate}
	\item\label{omega 1} For each $\alpha$, $\omega(\alpha,n)$ is increasing with respect to $n$;
	\item\label{omega 2} For each $n$, $\omega(\alpha,n)$ is constant on      $[(k-1)/n,k/n)$, where $1\leq k\leq(p-1)n$, with respect to $\alpha$;
    \item\label{omega 3} For each $n$, $\omega(\alpha,n)$ is increasing on $[0,(p-1)/2]+1/n)$ and decreasing on $[(p-1)/2+1/n,p-1]$ with respect to $\alpha$.
\end{enumerate}	   
\end{lem}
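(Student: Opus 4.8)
The plan is to reduce parts \eqref{omega 1} and \eqref{omega 3} to two elementary facts about the single integer sequence
\[
N(s,n):=\card W(s,n)=\card\bigl\{x_1x_2\cdots x_n\in A^n\colon x_1+\cdots+x_n=s\bigr\},\qquad 0\le s\le(p-1)n,
\]
which is precisely the coefficient sequence of the polynomial $(1+z+\cdots+z^{p-1})^n$, while \eqref{omega 2} is handled directly. For \eqref{omega 2}: if $\alpha\in[(k-1)/n,k/n)$ then $\alpha n\in[k-1,k)$, hence $[\alpha n]=k-1$, and therefore $\omega(\alpha,n)=\card W([\alpha n],n)=N(k-1,n)$ does not depend on $\alpha$ on this interval. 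This is immediate from the definition.

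For \eqref{omega 1}, fix $\alpha\in[0,p-1]$ and set $d:=[\alpha(n+1)]-[\alpha n]$. Since $t\mapsto[t]$ is non-decreasing and $\alpha(n+1)\ge\alpha n$, we get $d\ge0$; and since $[\alpha(n+1)]\le\alpha(n+1)$ while $[\alpha n]>\alpha n-1$, we get $d<\alpha+1\le p$, so $d\in\{0,1,\dots,p-1\}$. The map $x_1\cdots x_n\mapsto x_1\cdots x_n d$ is then injective from $W([\alpha n],n)$ into $W([\alpha(n+1)],n+1)$, because the image word has digit sum $[\alpha n]+d=[\alpha(n+1)]$. Hence $\omega(\alpha,n)=N([\alpha n],n)\le N([\alpha(n+1)],n+1)=\omega(\alpha,n+1)$, which is \eqref{omega 1}.

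For \eqref{omega 3}, by \eqref{omega 2} it suffices to prove that $s\mapsto N(s,n)$ is symmetric about $(p-1)n/2$ and unimodal, and then to record which values $[\alpha n]$ assumes as $\alpha$ runs over the two subintervals. Symmetry is proved exactly as in Theorem~\ref{theorem barh bh}: $x_1\cdots x_n\mapsto(p-1-x_1)\cdots(p-1-x_n)$ is a bijection of $W(s,n)$ onto $W((p-1)n-s,n)$. For unimodality I would induct on $n$, the case $n=1$ being trivial, using $N(s,n)=\sum_{j=0}^{p-1}N(s-j,n-1)$, which telescopes to
\[
N(s+1,n)-N(s,n)=N(s+1,n-1)-N(s+1-p,n-1);
\]
assuming $N(\cdot,n-1)$ is symmetric and unimodal about $m:=(p-1)(n-1)/2$, then for $u:=s+1\le(p-1)n/2=m+(p-1)/2$ one checks $|u-m|\le|u-p-m|$ by an elementary case split (using $u-p-m<u-m$ and $u\le m+p/2$), so the right-hand side is $\ge0$ and $N(\cdot,n)$ is non-decreasing up to its centre; together with symmetry this is unimodality. (Alternatively one may simply invoke the classical unimodality of the coefficients of $(1+z+\cdots+z^{p-1})^n$.) Finally, as $\alpha$ increases over $[0,(p-1)/2+1/n)$ the integer $[\alpha n]$ runs through $0,1,2,\dots$ up to $\lceil(p-1)n/2\rceil$, which still lies in the non-decreasing part of $N(\cdot,n)$; and as $\alpha$ increases over $[(p-1)/2+1/n,p-1]$ the integer $[\alpha n]$ runs from just past the centre up to $(p-1)n$, staying in the non-increasing part. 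The additive $1/n$ is tuned precisely so that these two ranges of $[\alpha n]$ overlap, when necessary, only on the two-term plateau at the centre that occurs when $(p-1)n$ is odd; this yields \eqref{omega 3}.

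The only step that is not a one-line unwinding of the definition is the unimodality of $N(\cdot,n)$ used in \eqref{omega 3}. It is classical, but a self-contained proof requires either the lemma that a convolution of symmetric unimodal finite sequences is again symmetric unimodal, or the short induction sketched above; this is where I expect the real (though modest) work to lie, everything else being a direct consequence of the definition of $\omega(\alpha,n)$.
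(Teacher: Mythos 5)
Your proposal is correct, but there is nothing in the paper to compare it with: the author simply declares the lemma ``evident'' and gives no proof, so your write-up supplies the missing argument rather than an alternative one. Your reductions are exactly what the claim rests on. Part (2) is indeed just $[\alpha n]=k-1$ for $\alpha\in[(k-1)/n,k/n)$. For part (1), the injection $x_1\cdots x_n\mapsto x_1\cdots x_nd$ with $d=[\alpha(n+1)]-[\alpha n]$ works because $0\le d<\alpha+1\le p$ forces $d\in A$, and it correctly gives the weak monotonicity that is intended (strict monotonicity is false, e.g.\ $\omega(0,n)\equiv1$). For part (3), the reduction via (2) to the symmetry and unimodality of $s\mapsto\card W(s,n)$ (your $N(s,n)$, the coefficients of $(1+z+\cdots+z^{p-1})^n$) is the right key fact; your telescoping identity $N(s+1,n)-N(s,n)=N(s+1,n-1)-N(s+1-p,n-1)$ together with the comparison $|u-m|\le|u-p-m|$ for $u\le m+(p-1)/2$ proves it, with the convention $N(t,n-1)=0$ outside $[0,(p-1)(n-1)]$ handling the boundary terms. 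The endpoint bookkeeping is also right: on $[0,(p-1)/2+1/n)$ one has $[\alpha n]\le\lceil(p-1)n/2\rceil$, still in the non-decreasing range (the two central values coincide when $(p-1)n$ is odd), while on $[(p-1)/2+1/n,p-1]$ one has $[\alpha n]\ge\lceil(p-1)n/2\rceil$ (and $\ge(p-1)n/2+1$ when $(p-1)n$ is even), which lies in the non-increasing range; this explains the role of the $+1/n$ in the statement (whose printed form $[0,(p-1)/2]+1/n)$ is a typo for $[0,(p-1)/2+1/n)$, as you read it). In short, your proof is complete and could stand verbatim where the paper asserts the lemma without argument.
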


Moreover, we have

\begin{lem}\label{lemma logcard}
	Let $0\leq\alpha\leq p-1$. Then
	\[\lim_{n\to\infty}\frac{\log\card W([\alpha n],n)}{(\log p)n}=h(\alpha).\]
\end{lem}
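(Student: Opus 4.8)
The plan is to show that $\log\card W([\alpha n],n)/((\log p)n)$ and $\log h(\alpha,n,\delta)/((\log p)n)$ have the same limit by sandwiching the former between two copies of the latter, exploiting that $W([\alpha n],n)$ counts words whose digit sum lands on a \emph{single} value $[\alpha n]$, while $H(\alpha,n,\delta)$ counts words landing in a \emph{window} of width $2n\delta$ around $\alpha n$. First I would record the obvious inclusion: since $[\alpha n]/n \to \alpha$, for every $\delta>0$ and all large $n$ the word set $W([\alpha n],n)$ is contained in $H(\alpha,n,\delta)$, so $\card W([\alpha n],n)\le h(\alpha,n,\delta)$; taking $\varlimsup_n$ and then $\delta\to 0$ gives $\varlimsup_n \log\card W([\alpha n],n)/((\log p)n)\le h(\alpha)$ by \eqref{definiton h alpha}.

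For the reverse inequality the key observation is a pigeonhole bound: $H(\alpha,n,\delta)$ is partitioned according to the exact digit sum, which takes at most $2n\delta+1$ integer values in the interval $(n(\alpha-\delta),n(\alpha+\delta))$, so
\[
h(\alpha,n,\delta)\le (2n\delta+1)\max_{|k-\alpha n|< n\delta}\card W(k,n).
\]
Now I need to relate $\card W(k,n)$ for $k$ near $\alpha n$ back to $\card W([\alpha n],n)=\omega(\alpha,n)$. Here I would invoke the monotonicity in Lemma \ref{theorem enumerate 2}\eqref{omega 3}: when $\alpha<(p-1)/2$, the quantity $\omega(\cdot,n)$ is increasing near $\alpha$, so for $k<n(\alpha+\delta)$ one has $\card W(k,n)\le \omega(\alpha+\delta,n)$ (for $n$ large enough that $k/n$ stays in the increasing range); symmetrically when $\alpha>(p-1)/2$ one bounds by $\omega(\alpha-\delta,n)$, and the case $\alpha=(p-1)/2$ is handled by Corollary \ref{corollary p-1 2} together with the trivial upper bound $\card W(k,n)\le p^n$, giving the limit $1=h((p-1)/2)$ directly. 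Thus
\[
\frac{\log h(\alpha,n,\delta)}{(\log p)n}\le \frac{\log(2n\delta+1)}{(\log p)n}+\frac{\log\omega(\alpha+\delta,n)}{(\log p)n},
\]
and the first term vanishes as $n\to\infty$. Taking $\varlimsup_n$, then using the first part of the argument with $\alpha$ replaced by $\alpha+\delta$ to see that $\varlimsup_n\log\omega(\alpha+\delta,n)/((\log p)n)\le \varlimsup_m\varlimsup_n\log h(\alpha+\delta,n,\cdot)$ — more cleanly, that this $\varlimsup$ is itself bounded by $h(\alpha+\delta)$ plus an error that vanishes — and finally sending $\delta\to0$ and using continuity of $h$ from Theorem \ref{theorem enumerate 1}\eqref{halpha 2}, I get $h(\alpha)\le\varliminf_n\log\omega(\alpha,n)/((\log p)n)$. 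Combined with the first paragraph (which already bounds the $\varlimsup$ by $h(\alpha)$), the $\varliminf$ and $\varlimsup$ coincide with $h(\alpha)$, so the limit exists and equals $h(\alpha)$; note that $\omega(\alpha,n)$ being increasing in $n$ (Lemma \ref{theorem enumerate 2}\eqref{omega 1}) is not strictly needed but confirms the limit behaves well.

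The main obstacle I anticipate is making the ``$k$ near $\alpha n$'' bookkeeping precise: one must be careful that $[\alpha n]/n$ and $[(\alpha\pm\delta)n]/n$ are on the correct side of $(p-1)/2$ and within the monotone range of $\omega(\cdot,n)$ described in Lemma \ref{theorem enumerate 2}\eqref{omega 3}, which holds once $n$ is large enough relative to $\delta$ and the distance from $\alpha$ to $(p-1)/2$ — so the argument should first fix $\alpha\ne(p-1)/2$, choose $\delta$ small compared to $|\alpha-(p-1)/2|$, and then let $n\to\infty$. The endpoints $\alpha=0$ and $\alpha=p-1$ need the same Stirling-type estimate already used in \eqref{inequality h zero} to see $\omega(\alpha,n)$ grows subexponentially, matching $h(0)=h(p-1)=0$. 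Everything else is a routine chain of $\varlimsup$/$\varliminf$ manipulations and the continuity of $h$.
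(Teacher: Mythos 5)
Your upper-bound half is fine and matches the paper's (one-line) observation that $W([\alpha n],n)\subset H(\alpha,n,\delta)$ for large $n$, so $\varlimsup_n\log\card W([\alpha n],n)/((\log p)n)\le h(\alpha)$. The genuine gap is in the lower bound, which is the whole content of the lemma. Your chain reads: $h(\alpha,n,\delta)\le(2n\delta+1)\,\omega(\alpha+\delta,n)$ by pigeonhole and monotonicity, and then you bound $\varlimsup_n\log\omega(\alpha+\delta,n)/((\log p)n)$ \emph{above} by $h(\alpha+\delta)$ and send $\delta\to0$. But that chain never involves $\omega(\alpha,n)$ at all: in the limit it only yields $h(\alpha)\le h(\alpha+\delta)\to h(\alpha)$, which is vacuous, and the asserted conclusion $h(\alpha)\le\varliminf_n\log\omega(\alpha,n)/((\log p)n)$ does not follow. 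The monotonicity you invoke gives $\omega(\alpha,n)\le\omega(\alpha+\delta,n)$, i.e.\ the wrong direction; to transfer the pigeonhole lower bound from the top of the window back to $\omega(\alpha,n)$ you need a comparison of the form $\omega(\alpha+\delta,n)\le C_n(\delta)\,\omega(\alpha,n)$ with $C_n(\delta)$ subexponential in $n$. This is exactly the step the paper supplies: the combinatorial estimate $\card W([(\alpha+\delta)n],n)\le\frac{n^{n\delta}}{(n\delta-1)!}\card W([\alpha n],n)$, whose cost $\frac{n^{n\delta}}{(n\delta-1)!}$ is $p^{n\cdot o_\delta(1)}$ by Stirling (the same estimate as in \eqref{inequality h zero}); combined with $\card W([(\alpha+\delta)n],n)\le h(\alpha,n,\delta)\le([2\delta n]+1)\card W([(\alpha+\delta)n],n)$ and \eqref{definiton h alpha ==}, this closes the argument. (An alternative repair in the spirit of your sketch: pigeonhole on the one-sided window below $\alpha$, i.e.\ on $h(\alpha-\delta,n,\delta)$, so that by monotonicity every $\card W(k,n)$ in that window is at most $\omega(\alpha,n)$, then use that $\varliminf_n\log h(\alpha-\delta,n,\delta)/((\log p)n)\ge h(\alpha-\delta)$ by \eqref{definiton h alpha ==} and monotonicity in $\delta$, and finish with continuity of $h$. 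But as written your window is centered at $\alpha$ and the argument is circular.)

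A secondary gap: your treatment of $\alpha=(p-1)/2$ does not work as stated. Corollary \ref{corollary p-1 2} gives the target value $h((p-1)/2)=1$, and $\card W(k,n)\le p^n$ gives only the trivial upper bound; what is needed is a \emph{lower} bound $\omega((p-1)/2,n)\ge p^{n(1-o(1))}$, e.g.\ from $p^n=\sum_k\card W(k,n)\le((p-1)n+1)\max_k\card W(k,n)$ together with the unimodality/symmetry in Lemma \ref{theorem enumerate 2}, which places the maximum at $k=[\tfrac{p-1}{2}n]$ (up to one unit). The paper handles this as a separate case for the same reason. Your remarks about keeping $[\alpha n]/n$ and $[(\alpha\pm\delta)n]/n$ on the correct side of $(p-1)/2$, and about the endpoints $\alpha=0,p-1$, are fine.
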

\begin{proof}
	We first show that
\begin{align}\label{equality lemma logcard}
	\varliminf_{n\to\infty}\frac{\log\card W([\alpha n],n)}{(\log p)n}=h(\alpha).
\end{align}
	The proof is divided into three cases: $0\leq\alpha<(p-1)/2$, $\alpha=(p-1)/2$ and $(p-1)/2<\alpha\leq p-1$. Here, we give only the proof of the first case.
	
	Take $\delta>0$ such that $\alpha+\delta<(p-1)/2$. Since 
	\[\card W([(\alpha+\delta)n],n)\leq\frac{n^{n\delta}}{(n\delta-1)!}\card W([\alpha n],n),\]
	by the Stirling's approximation we have 
\begin{align*}
	\begin{split}
		\varliminf_{n\to\infty}\frac{\log\card W([(\alpha+\delta)n],n)}{(\log p)n}&\leq\varliminf_{n\to\infty}\frac{\log\frac{n^{n\delta}}{(n\delta-1)!}}{(\log p)n}+\varliminf_{n\to\infty}\frac{\log\card W([\alpha n],n)}{(\log p)n}\\
		&=\frac{\delta\log\frac{e}{\delta}}{\log p}+\varliminf_{n\to\infty}\frac{\log\card W([\alpha n],n)}{(\log p)n}. 
	\end{split}
\end{align*}	
Let $\delta\to0$ in both sides, since $\lim_{\delta\to0}(\delta\log\frac{e}{\delta})/\log p=0$, we have
\begin{align}\label{inequality log halpha}
		\lim_{\delta\to0}\varliminf_{n\to\infty}\frac{\log\card W([(\alpha+\delta)n],n)}{(\log p)n}\leq\varliminf_{n\to\infty}\frac{\log\card W([\alpha n],n)}{(\log p)n}.
\end{align}
Moreover, by the properties of function $\omega(\alpha,n)$ in Lemma~\ref{theorem enumerate 2}, we have
\begin{align*}
	\begin{split}
		\card W([(\alpha+\delta)n],n)&\leq\card W\big(\big[[(\alpha-\delta)n]+1,[(\alpha+\delta)n]\big],n\big)\\
		&=h(\alpha,n,\delta)\leq([2\delta n]+1)\card W([(\alpha+\delta)n],n).  
	\end{split}
\end{align*}
It follows that
\[\lim_{\delta\to0}\varliminf_{n\to\infty}\frac{\log\card W([(\alpha+\delta)n],n)}{(\log p)n}=\lim_{\delta\to0}\varliminf_{n\to\infty}\frac{\log h(\alpha,n,\delta)}{(\log p)n}=h(\alpha).\]	
This, together with \eqref{inequality log halpha}, yields that
\[h(\alpha)\leq\varliminf_{n\to\infty}\frac{\log\card W([\alpha n],n)}{(\log p)n}.\]
On the other hand, the inequality for the opposite direction is apparently true. So, the equality \eqref{equality lemma logcard} is established.

Since the equality $\varlimsup_{n\to\infty}\log\card W([\alpha n],n)/\big((\log p)n\big)=h(\alpha)$ can be proved as the above way, the proof of this lemma is finished now.
\end{proof}	

In the sequel, we will construct a Moran set $\mathcal{W}_M(\alpha)$, where $0\leq\alpha<p-1$, to obtain the lower bound of Hausdorff dimension of $B(\alpha,\beta)$ in Theorem~\ref{theorem main theorem}. At first, we construct recursively two sequences of sets of words $\{W_n(\alpha,M)\}_{n=1}^\infty$ and $\{V_n(\alpha,M)\}_{n=1}^\infty$ below. 

Let $M$ be sufficiently large such that $[\alpha M]+1<(p-1)M$. For brevity, write
\[W_1(\alpha,M)=W([\alpha M],M),\quad V_1(\alpha,M)=W([\alpha M]+1,M).\]
Suppose that the sets $W_i(\alpha,M)$ and $V_i(\alpha,M)$ are well-defined for all $1\leq i\leq n$,
then define 
\begin{align*}
	\begin{split}
		W_{n+1}(\alpha,M)=\big\{&x_1\cdots x_{2^nM}\in W([\alpha 2^nM],2^nM)\colon\\
		&x_{2^{n-1}Mi+1}\cdots x_{2^{n-1}M(i+1)}\in W_n(\alpha,M)\cup V_n(\alpha,M),i=0,1\big\},
	\end{split}
\end{align*}
\begin{align*}
	\begin{split}
		V_{n+1}(\alpha,M)=\big\{&x_1\cdots x_{2^nM}\in W([\alpha 2^nM]+1,2^nM) \colon \\&x_{2^{n-1}Mi+1}\cdots x_{2^{n-1}M(i+1)}\in W_n(\alpha,M)\cup V_n(\alpha,M),i=0,1\big\}.
	\end{split}
\end{align*}
The above definitions are valid since the estimation
\begin{equation}\label{cc5}
2[\alpha2^kM]<[\alpha2^{k+1}M]+1\leq 2\left([\alpha2^kM]+1\right)
\end{equation}
holds for any $0\leq\alpha<p-1$ and $k\geq0$.

With this construction, we know that for each $n\geq1$, every word in $W_n(\alpha,M)$ is of length $2^{n-1}M$ and the sum of elements is $[\alpha2^{n-1}M]$. Similarly, every word in $V_n(\alpha,M)$ is of length $2^{n-1}M$ and the sum of elements is $[\alpha2^{n-1}M]+1$. Moreover, we even have
%in this note we call $W_n(\alpha,M)$ and $V_n(\alpha,M)$, $n\geq0$, $\alpha$-blocks, and the words in them $\alpha$-words if there is no confusion.

\begin{rem}\label{remark decompose}
	For any $0\leq i\leq n-1$, we can decompose uniquely each word in $W_n(\alpha,M)$ and $V_n(\alpha,M)$ into successive concatenations of $2^iM$-words, the sum of elements in each $2^iM$-word is $[\alpha 2^iM]$ or $[\alpha 2^iM]+1$.
\end{rem} 

Based on the family of sets of $\alpha$-words $\{W_n(\alpha,M)\}_{n=1}^\infty$, define the Moran set
\begin{align*}
		\mathcal{W}_M(\alpha):=0.\prod_{n=1}^\infty W_n(\alpha,M).
\end{align*}
Then we have

\begin{lem}\label{lemma alpha M}
	Let $0\leq\alpha<p-1$, then
	\begin{align}\label{equantion alpha M}
		\lim_{M\to\infty}\dim_H\mathcal{W}_M(\alpha)=h(\alpha).
	\end{align}
\end{lem}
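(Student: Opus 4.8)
The plan is to sandwich $\dim_H \mathcal{W}_M(\alpha)$ between two quantities that both converge to $h(\alpha)$ as $M\to\infty$. Since $\mathcal{W}_M(\alpha) = 0.\prod_{n=1}^\infty W_n(\alpha,M)$ is a homogeneous Moran set in which the $n$-th generation words have length $2^{n-1}M$ and each word in generation $n$ extends to a word in generation $n+1$ by choosing one of two halves from $W_n(\alpha,M)\cup V_n(\alpha,M)$, the Moran dimension formula of Lemma~\ref{lemma FWW} applies. The key is to estimate $\card W_n(\alpha,M)$ and $\card V_n(\alpha,M)$. By the recursive construction, each word in $W_{n+1}(\alpha,M)$ is a concatenation of two words from $W_n(\alpha,M)\cup V_n(\alpha,M)$ subject to the constraint that the total digit sum equals $[\alpha 2^n M]$; by \eqref{cc5} this total forces the two halves to have sums $([\alpha 2^{n-1}M], [\alpha 2^{n-1}M])$, $([\alpha 2^{n-1}M], [\alpha 2^{n-1}M]+1)$, $([\alpha 2^{n-1}M]+1, [\alpha 2^{n-1}M])$, or possibly $([\alpha 2^{n-1}M]+1, [\alpha 2^{n-1}M]+1)$, and similarly for $V_{n+1}$. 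Writing $w_n = \card W_n(\alpha,M)$ and $v_n = \card V_n(\alpha,M)$, one obtains recursions of the shape $w_n^2 \le w_{n+1} \le (w_n+v_n)^2$ and the same for $v_{n+1}$, so that $(w_n v_n)$ grows essentially like a square at each step.

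First I would make this precise: set $a_n = \log_p(w_n + v_n)$ (or $\log_p w_n$) and show from the recursions that $a_{n+1} = 2a_n + O(1)$ uniformly, hence $a_n / 2^{n-1} \to L_M$ for some limit $L_M$, and moreover $L_M = \lim_n \frac{\log_p \card W_n(\alpha,M)}{2^{n-1}}$ exists and equals $\lim_n \frac{\log \card W([\alpha 2^{n-1}M],\, 2^{n-1}M)}{(\log p)\, 2^{n-1}M} \cdot M$ up to the bounded correction; here the crucial input is Lemma~\ref{lemma logcard}, which gives $\lim_{m\to\infty} \frac{\log\card W([\alpha m],m)}{(\log p)m} = h(\alpha)$ along $m = 2^{n-1}M$. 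Since every word in $W_n(\alpha,M)$ is an $\alpha$-word of length $2^{n-1}M$ with digit sum $[\alpha 2^{n-1}M]$, we have $\card W_n(\alpha,M) \le \card W([\alpha 2^{n-1}M], 2^{n-1}M)$, giving the upper bound $L_M \le h(\alpha)\log p \cdot M + o(M)$; for the lower bound I would use the two-halves decomposition to show $\card W_n(\alpha,M) \ge \big(\card W_1(\alpha,M)\big)^{2^{n-1}}$-type growth, i.e. $L_M \ge 2^{1-n}\cdot 2^{n-1}\log_p \card W([\alpha M],M) = \log_p \card W([\alpha M],M)$, which by Lemma~\ref{lemma logcard} again is $\ge (h(\alpha) - \vep_M)(\log p) M$ with $\vep_M \to 0$.

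Then I would plug into Lemma~\ref{lemma FWW}: with $N_k = \card W_k(\alpha,M)/\card W_{k-1}(\alpha,M)$ (the branching number at level $k$, which equals the number of admissible extensions and is $\ge 1$ with $\prod_{i\le k} N_i = \card W_k$) and contraction ratios $c_k = p^{-2^{k-2}M}$ (so the $k$-th generation cylinders have length $p^{-2^{k-1}M}$), we get
\[
\dim_H \mathcal{W}_M(\alpha) = \varliminf_{k\to\infty} \frac{\log \card W_k(\alpha,M)}{(\log p)\, 2^k M} = \frac{L_M}{(\log p)\,2M}\cdot 2 = \frac{L_M}{(\log p)M},
\]
and the bounds above give $h(\alpha) - \vep_M \le \dim_H \mathcal{W}_M(\alpha) \le h(\alpha) + \vep_M'$ with $\vep_M, \vep_M' \to 0$, whence \eqref{equantion alpha M}. (One must check $\inf_k c_k > 0$ fails here since $c_k \to 0$, so only the first assertion of Lemma~\ref{lemma FWW} gives a lower bound directly; the matching upper bound on the dimension comes instead from the natural covering of $\mathcal{W}_M(\alpha)$ by its generation-$k$ cylinders, exactly as in the upper-bound argument in the proof of Theorem~\ref{theorem general}, using $\card W_k(\alpha,M) \le \card W([\alpha 2^{k-1}M], 2^{k-1}M)$.)

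I expect the main obstacle to be the bookkeeping in the recursion for $\card W_n$ and $\card V_n$: one must verify using \eqref{cc5} and Remark~\ref{remark decompose} precisely which pairs of half-sums are admissible, control the possibility that the pair $([\alpha 2^{n-1}M]+1, [\alpha 2^{n-1}M]+1)$ is or is not allowed (this depends on whether $2([\alpha2^{n-1}M]+1)$ can equal $[\alpha 2^nM]$), and confirm that these finitely many cases only perturb $\log \card W_n$ by an additive $O(n)$ term — negligible after dividing by $2^{n-1}M$ and then letting $M\to\infty$. Once that uniform $O(1)$-per-step control is in hand, the convergence $a_n/2^{n-1}\to L_M$ and the identification of $L_M$ via Lemma~\ref{lemma logcard} are routine.
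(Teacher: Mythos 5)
Your overall plan (estimate $\card W_n(\alpha,M)$, identify the growth rate via Lemma~\ref{lemma logcard}, then feed a Moran-type dimension formula and let $M\to\infty$) is the right one, but the step where you extract $\dim_H\mathcal{W}_M(\alpha)$ from Lemma~\ref{lemma FWW} has a genuine gap. First, the bookkeeping is off: a generation-$k$ basic interval of $\mathcal{W}_M(\alpha)$ corresponds to a choice of one word from each of $W_1(\alpha,M),\dots,W_k(\alpha,M)$, so the branching number at level $k$ is $N_k=\card W_k(\alpha,M)$ (not the ratio $\card W_k/\card W_{k-1}$), the number of generation-$k$ intervals is $\prod_{i\le k}\card W_i(\alpha,M)$ (not $\card W_k$), and $c_k=p^{-2^{k-1}M}$. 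More seriously, even after these corrections the first assertion of Lemma~\ref{lemma FWW} does not give your claimed identity $\dim_H\mathcal{W}_M(\alpha)=L_M/((\log p)M)$. Writing $\theta_M=\lim_k\frac{\log\card W_k(\alpha,M)}{(\log p)2^{k-1}M}$, the formula of Lemma~\ref{lemma FWW} yields
\[
\varliminf_{k\to\infty}\frac{\sum_{i\le k}\log\card W_i(\alpha,M)}{(2^{k+1}-1)M\log p-\log\card W_{k+1}(\alpha,M)}=\frac{\theta_M}{2-\theta_M},
\]
because the length $2^{k}M$ of the $(k+1)$-st block is comparable to the total length $(2^k-1)M$ of the first $k$ blocks, so the term $c_{k+1}N_{k+1}$ in the denominator is not negligible. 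Since $\theta_M\to h(\alpha)$, your lower bound only tends to $h(\alpha)/(2-h(\alpha))$, which is strictly less than $h(\alpha)$ whenever $0<h(\alpha)<1$; this is exactly the phenomenon that forces the paper, in the proof of Theorem~\ref{theorem general}, to insert repetition factors $m_j$ with $n_{j+1}/\sum_{i\le j}m_in_i\to0$. (Your covering argument for the upper bound is fine; it is the lower bound that falls short.)

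The paper's proof sidesteps the doubling-block generations entirely: by Remark~\ref{remark decompose}, every $x\in\mathcal{W}_M(\alpha)$ is a concatenation of $M$-blocks whose digit sums lie in $\{[\alpha M],[\alpha M]+1\}$, so $\mathcal{W}_M(\alpha)\subset\mathcal{W}\big(\big[[\alpha M],[\alpha M]+1\big],M\big)$, giving the upper bound via Lemma~\ref{lemma pqm}; for the lower bound one compares with the constant-block-length set $\mathcal{W}([\alpha M],M)$ (equivalently, fix one admissible pattern of $M$-block sums inside $\mathcal{W}_M(\alpha)$: this produces a Moran subset with constant contraction $p^{-M}$, so $\inf_kc_k>0$ and Lemma~\ref{lemma FWW} applies at full strength, with branching at least $\min\{\card W([\alpha M],M),\card W([\alpha M]+1,M)\}$ at each level). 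Both bounds tend to $h(\alpha)$ by Lemma~\ref{lemma logcard}, using the monotonicity of $\omega$ from Lemma~\ref{theorem enumerate 2}. A secondary inaccuracy in your sketch: by \eqref{cc5} either $[\alpha2^nM]=2[\alpha2^{n-1}M]$, in which case $w_{n+1}=w_n^2$ and $v_{n+1}=2w_nv_n$, or $[\alpha2^nM]=2[\alpha2^{n-1}M]+1$, in which case $w_{n+1}=2w_nv_n$ and $v_{n+1}=v_n^2$; so your stated inequalities $w_n^2\le w_{n+1}$ and $v_n^2\le v_{n+1}$ can each fail, and the pair $(w_n,v_n)$ must be analyzed jointly. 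That part is repairable bookkeeping, but the application of Lemma~\ref{lemma FWW} to the exponentially growing generations is the essential flaw to fix.
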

\begin{proof}
For the case $0\leq\alpha<(p-1)/2$, take $M$ to be large enough such that $[\alpha M]+1<[(p-1)M/2]$. By the monotonicity of function $\omega$ in \eqref{omega 3} of Lemma~\ref{theorem enumerate 2}, we have 
\[\card W([\alpha M],M)=\omega(\alpha,M)\leq \omega(\alpha+1/M,M)=\card W([\alpha M]+1,M).\]
From this and the structures of words in $\mathcal{W}_M(\alpha)$ in Remark~\ref{remark decompose}, we know that
\begin{align}\label{inequality alpha M}
	\dim_H\mathcal{W}([\alpha M],M)\leq\dim_H\mathcal{W}_M(\alpha)\leq\dim_H\mathcal{W}\big(\big[[\alpha M],[\alpha M]+1\big],M\big).
\end{align}
Moreover, by Lemma \ref{lemma pqm}, we have
\[\dim_H\mathcal{W}([\alpha M],M)=\frac{\log\card W([\alpha M],M)}{(\log p)M}\]
and
\begin{align*}
	\begin{split}
		\dim_H\mathcal{W}\big(\big[[\alpha M],[\alpha M]+1\big],M\big) 
		=\frac{\log\big(\card W([\alpha M],M)+\card W([\alpha M]+1,M)\big)}{(\log p)M}.
	\end{split}
\end{align*}
Thus, by Lemma~\ref{lemma logcard} we may obtain that
\[\lim_{M\to\infty}\dim_H\mathcal{W}([\alpha M],M)=\lim_{M\to\infty}\dim_H\mathcal{W}\big(\big[[\alpha M],[\alpha M]+1\big],M\big)=h(\alpha).\]
This, together with \eqref{inequality alpha M}, leads to the conclusion~\eqref{equantion alpha M}.

On the other hand, for the case $(p-1)/2\leq\alpha<p-1$, we can get similarly that
\[\dim_H\mathcal{W}([\alpha M]+1,M)\leq\dim_H\mathcal{W}_M(\alpha)\leq\dim_H\mathcal{W}\big(\big[[\alpha M],[\alpha M]+1\big],M\big)\]
for sufficiently large $M$. The proof of the remainder is similar to the first case. We omit the details here.

The proof is completed now.
\end{proof}

\section{Proof of Theorem \ref{theorem main theorem}}

In this section, we will give the proof of Theorem~\ref{theorem main theorem}. 
First, we would like to present the following lemma to reveal the relations among the lower and upper digit means of $x\in I$ and the lower and upper moving digit means of $x\in I$, which will be used to achieve the upper bound of Hausdorff dimension of $B(\alpha,\beta)$. 

\begin{lem}\label{lemma relation}
	For any $x\in I$, we have
	\begin{equation} 
	\b{\it M}(x)\leq\b{\it A}(x)\leq\bar{A}(x)\leq\bar{M}(x).
	\end{equation}	
\end{lem}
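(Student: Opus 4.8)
The plan is to unwind the definitions and exploit the fact that the $n$-th moving digit means are obtained by optimizing over \emph{all} tails of $x$, whereas the ordinary lower and upper digit means only look at the single tail starting at index $1$. Concretely, recall
\[
\b{\it M}_n(x)=\varliminf_{m\to\infty}\frac{S_n(T^mx)}{n},\qquad \bar{M}_n(x)=\varlimsup_{m\to\infty}\frac{S_n(T^mx)}{n},
\]
and $\b{\it M}(x)=\lim_n\b{\it M}_n(x)$, $\bar{M}(x)=\lim_n\bar{M}_n(x)$ (the limits existing by the subadditivity noted in the first Remark). The middle inequality $\b{\it A}(x)\le\bar{A}(x)$ is immediate from $\varliminf\le\varlimsup$, so the work is in the two outer inequalities, which are dual to each other; I would prove $\bar{A}(x)\le\bar{M}(x)$ in detail and only remark that $\b{\it M}(x)\le\b{\it A}(x)$ follows by the symmetric argument (replacing each digit $x_i$ by $p-1-x_i$, or just reversing the roles of $\varliminf$ and $\varlimsup$).

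For $\bar{A}(x)\le\bar{M}(x)$, fix $n\ge 1$ and write each long partial sum in blocks of length $n$. For $N=kn+r$ with $0\le r<n$ one has
\[
S_N(x)=\sum_{j=0}^{k-1}S_n(T^{jn}x)+\big(\text{a remainder of at most }(p-1)r\big),
\]
so
\[
\frac{S_N(x)}{N}\le \frac{1}{kn}\sum_{j=0}^{k-1}S_n(T^{jn}x)+\frac{(p-1)r}{N}
=\frac1k\sum_{j=0}^{k-1}\frac{S_n(T^{jn}x)}{n}+\frac{(p-1)r}{N}.
\]
Now $\varlimsup_{m\to\infty}S_n(T^mx)/n=\bar{M}_n(x)$, so for every $\vep>0$ there is $m_0$ with $S_n(T^mx)/n\le \bar{M}_n(x)+\vep$ for all $m\ge m_0$; the finitely many blocks with $jn<m_0$ contribute $O(1/k)\to0$. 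Taking $\varlimsup_{N\to\infty}$ (equivalently $k\to\infty$) gives $\bar{A}(x)\le \bar{M}_n(x)+\vep$, hence $\bar{A}(x)\le\bar{M}_n(x)$, and finally letting $n\to\infty$ yields $\bar{A}(x)\le\lim_n\bar{M}_n(x)=\bar{M}(x)$.

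The only genuinely delicate point — and the step I would treat most carefully — is the Cesàro-type averaging bound: controlling $\frac1k\sum_{j=0}^{k-1}S_n(T^{jn}x)/n$ by $\bar{M}_n(x)+\vep$ requires knowing that the terms indexed by $jn\ge m_0$ are \emph{all} bounded by $\bar{M}_n(x)+\vep$ (true by choice of $m_0$, since $\bar{M}_n(x)$ is the $\varlimsup$ over the full sequence $m$, so in particular over the subsequence $m=jn$) and that the at-most-$m_0/n$ exceptional early terms, each at most $p-1$, are washed out by dividing by $k$. This is elementary but worth writing out, because it is exactly where one uses that the $\varlimsup$ in $\bar{M}_n$ ranges over \emph{all} shifts, not merely those that are multiples of $n$; an alternative is to avoid blocking and instead bound $S_N(x)$ directly by $S_n(T^mx)$ for a well-chosen $m$ near $N-n$, but the blocking argument keeps the bookkeeping cleanest. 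Everything else is just careful use of the inequalities $\varliminf\le\varlimsup$ and the monotone existence of the limits defining $\b{\it M}(x)$ and $\bar{M}(x)$.
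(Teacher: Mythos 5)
Your proof is correct and follows essentially the same route as the paper: both reduce to showing $\bar{A}(x)\leq\bar{M}(x)$ (with the other inequality by symmetry), decompose a long partial sum $S_N(x)$ into length-$n$ blocks, bound all but finitely many blocks by $\bar{M}_n(x)+\vep$ using the eventual bound coming from the $\varlimsup$, and note that the exceptional initial/terminal digits are washed out as $N\to\infty$. The only cosmetic difference is that the paper first fixes $n$ large with $\bar{M}_n(x)<\bar{M}(x)+\epsilon$ and starts its blocks after the threshold index $Q$, whereas you prove $\bar{A}(x)\leq\bar{M}_n(x)$ for every $n$ and then let $n\to\infty$; both are fine.
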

\begin{proof}
	It suffices to show that $\bar{A}(x)\leq\bar{M}(x)$. Write $\bar{M}(x)=\beta\in[0,p-1]$. Then, for any $\epsilon>0$, there exists an integer $N>0$, such that
	\[\varlimsup\limits_{m\to\infty}\frac{S_n(T^mx)}{n}<\beta+\epsilon,\quad\forall n\geq N.\]
	Furthermore, there exists an integer $Q=Q(n)\geq1$ such that
	\[\frac{S_n(T^mx)}{n}<\beta+\epsilon,\quad \mbox{i.e.},\quad \sum_{i=1}^nx_{m+i}\leq[n(\beta+\epsilon)]\]
	for any $ m\geq Q$. Suppose $t\geq Q$ and $t=Q+r+kn$, where $0\leq r\leq n-1$, then
	\[\frac{S_t(x)}{t}\leq\frac{Q+r+k[n(\beta+\epsilon)]}{Q+r+kn}.\]
	Let $t\to\infty$, then $k\to\infty$ and
	\[\varlimsup_{t\to\infty}\frac{S_t(x)}{t}\leq\frac{[n(\beta+\epsilon)]}{n}\leq\beta+\epsilon.\]
	Since $\epsilon$ is arbitrary, we have $\bar{A}(x)\leq\beta=\bar{M}(x)$.  
\end{proof}

Next, we would like to present a lemma for the computation of lower bound of the Hausdorff dimension of $B(\alpha,\beta)$. Let $\mathbb{M}$ be a subset of $\mathbb{N}$. We say the set $\mathbb{M}$ is of density $\rho\in[0,1]$ in $\mathbb{N}$ if
\[\lim_{n\to\infty}\frac{\card\{i\in\mathbb{M}\colon i\leq	n\}}{n}=\rho.\]  
Write
$\mathbb{N}\backslash\mathbb{M}=\{n_i\}_{i\geq1}$ where $n_i<n_{i+1}$ for all $i\geq1$. Define a
mapping $\varphi_\mathbb{M}\colon I\to I$ by
\[0.x_1x_2\ldots\mapsto 0.x_{n_1}x_{n_2}\ldots.\] 
Under the mapping $\varphi_\mathbb{M}$, for any given subset $D\subset I$, we may obtain another set $\varphi_\mathbb{M}(D)=\{\varphi_\mathbb{M}(x)\colon x\in D\}$. 
Moreover, we have

\begin{lem}[See Lemma 2.3 in~\cite{CT}]\label{lemma invariance}
	Suppose that the set $\mathbb{M}$ is of density zero in $\mathbb{N}$. Then for any set $D\subset I$ we have $\dim_HD=\dim_H\phi_\mathbb{M}(D)$.
\end{lem}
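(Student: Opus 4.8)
The plan is to establish the two inequalities $\dim_H\varphi_{\mathbb{M}}(D)\le\dim_H D$ and $\dim_H D\le\dim_H\varphi_{\mathbb{M}}(D)$ separately, in each case transporting covers by $p$-adic cylinders across $\varphi_{\mathbb{M}}$ and $\varphi_{\mathbb{M}}^{-1}$ respectively. The only consequence of the hypothesis I will use is that, writing $\mathbb{N}\backslash\mathbb{M}=\{n_1<n_2<\cdots\}$, density zero of $\mathbb{M}$ gives $\card(\mathbb{M}\cap\{1,\dots,m\})=o(m)$, so that $r_k:=n_k-k=\card(\mathbb{M}\cap\{1,\dots,n_k\})$ satisfies $r_k/k\to0$; equivalently, for every $\eta>0$ there is $K_0$ with $\card(\mathbb{M}\cap\{1,\dots,m\})<\eta m$ and $r_k<\frac{\eta}{1-\eta}k$ once $m,k\ge K_0$. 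I will also use the standard reduction that the Hausdorff dimension of a subset of $[0,1]$ may be computed using only covers by $p$-adic cylinders, since a set of diameter $<p^{-m}$ meets at most two level-$m$ cylinders.

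For $\dim_H\varphi_{\mathbb{M}}(D)\le\dim_H D$, I would fix $s>\dim_H D$ with $s<1$ (the case $\dim_H D=1$ being trivial) and fix $\eta>0$. Given $\varepsilon>0$, take a cover of $D$ by cylinders $I(w_i)$ of levels $m_i\ge K_0(\eta)$ with $\sum_i p^{-sm_i}<\varepsilon$. An inspection of the definition shows $\varphi_{\mathbb{M}}(I(w_i))$ is contained in the $p$-adic cylinder obtained by retaining the digits of $w_i$ in positions outside $\mathbb{M}$, whose level is $j_i=m_i-\card(\mathbb{M}\cap\{1,\dots,m_i\})\ge(1-\eta)m_i$. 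Since $\varphi_{\mathbb{M}}(D)\subset\bigcup_i\varphi_{\mathbb{M}}(I(w_i))$, the exponent $t=s/(1-\eta)$ gives $\sum_i(p^{-j_i})^{t}\le\sum_i p^{-sm_i}<\varepsilon$, so $\dim_H\varphi_{\mathbb{M}}(D)\le s/(1-\eta)$; letting $\eta\to0$ and then $s\to\dim_H D$ yields the inequality.

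For $\dim_H D\le\dim_H\varphi_{\mathbb{M}}(D)$, set $s_0=\dim_H\varphi_{\mathbb{M}}(D)$ and fix any $t'$ with $s_0<t'<1$. I would take $t=(s_0+t')/2$, then $\eta>0$ small enough that $\frac{\eta}{1-\eta}\le\frac{t'-t}{1-t'}$, and $K_0=K_0(\eta)$. Given $\varepsilon>0$, since $t>s_0$ there is a cover of $\varphi_{\mathbb{M}}(D)$ by cylinders $I(v_i)$ of levels $k_i\ge K_0$ with $\sum_i p^{-tk_i}<\varepsilon$. The set $\varphi_{\mathbb{M}}^{-1}(I(v_i))$ is the disjoint union of exactly $p^{r_{k_i}}$ cylinders of level $n_{k_i}$, one for each assignment of the digits in positions $\mathbb{M}\cap\{1,\dots,n_{k_i}\}$, and $D\subset\bigcup_i\varphi_{\mathbb{M}}^{-1}(I(v_i))$. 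The choice of $\eta$ forces $(t'-t)k_i\ge(1-t')r_{k_i}$ for $k_i\ge K_0$, which rearranges to $p^{r_{k_i}}(p^{-n_{k_i}})^{t'}\le p^{-tk_i}$; hence the $t'$-covering sum of the induced cover of $D$ is at most $\sum_i p^{-tk_i}<\varepsilon$, so $\dim_H D\le t'$. Letting $t'\to s_0$ completes the argument.

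The reduction to cylinder covers and the arithmetic rearrangement giving $p^{r_{k_i}}(p^{-n_{k_i}})^{t'}\le p^{-tk_i}$ are routine. I expect the only delicate point to be the pullback direction: a single level-$k$ cylinder pulls back through $\varphi_{\mathbb{M}}$ to a union of $p^{\,n_k-k}$ cylinders, and the argument works precisely because density zero of $\mathbb{M}$ makes this blow-up factor subexponential in $k$, so that it can be absorbed by an arbitrarily small increase of the exponent; the care needed there is to tie each cylinder level to the correct exponent so that the limits really return $\dim_H\varphi_{\mathbb{M}}(D)$ and not something larger.
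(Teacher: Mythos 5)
Your argument is correct. Note that the paper itself does not prove this lemma at all: it is quoted as Lemma 2.3 of the cited reference [CT], so there is no in-paper proof to compare against; what you have written is a self-contained proof of the cited fact, and it follows the standard route one would expect (transport of cylinder covers forward under $\varphi_{\mathbb{M}}$, and backward with a multiplicity $p^{r_k}=p^{n_k-k}$ that density zero makes subexponential, hence absorbable into an arbitrarily small increase of the exponent). The two key combinatorial facts are right: a level-$m$ cylinder maps into a level-$\bigl(m-\card(\mathbb{M}\cap\{1,\dots,m\})\bigr)$ cylinder, and a level-$k$ cylinder pulls back to exactly $p^{r_k}$ cylinders of level $n_k$; the exponent bookkeeping $p^{r_{k_i}}(p^{-n_{k_i}})^{t'}\le p^{-tk_i}$ checks out under your choice of $\eta$. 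Only cosmetic points remain: in the second direction you should dispose explicitly of the trivial case $\dim_H\varphi_{\mathbb{M}}(D)=1$ (otherwise no $t'<1$ exists, though the inequality is then automatic); your small-sum covers give vanishing Hausdorff content, which indeed forces vanishing Hausdorff measure, so the conclusion stands; and the ambiguity for the countably many points with terminating expansions affects neither containments nor dimensions. None of these is a genuine gap.
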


Lemma~\ref{lemma invariance} implies that for a set $D$, its Hausdorff dimension is invariant after deleting the digits, for which the set of their positions is of density zero in $\mathbb{N}$, from the sequences of numbers in $D$.

Now, we are ready to give the proof of Theorem~\ref{theorem main theorem}.

\begin{proof}[Proof of Theorem~\ref{theorem main theorem}]
According to Remark \ref{remark b alpha beta}, the proof is divided into three parts:
\begin{enumerate}
	    \item $\dim_HB(\alpha,\beta)=h(\beta)$ if $0\leq\alpha\leq\beta<(p-1)/2$;
		\item $\dim_HB(\alpha,\beta)=1$ if $\alpha\leq(p-1)/2\leq\beta$;
		\item $\dim_HB(\alpha,\beta)=h(\alpha)$ if $(p-1)/2<\alpha\leq\beta\leq p-1$.
\end{enumerate}
In the following, we will give the proofs of them respectively.	
	
(1) For the upper bound, by Lemma~\ref{lemma relation}, we have $B(\alpha,\beta)\subset\bar{S}(\beta)$. It follows that $\dim_HB(\alpha,\beta)\leq\dim_H\bar{S}(\beta)=h(\beta)$. 	
	
For the lower bound, construct the set	
\[\mathcal{W}_M(\alpha,\beta)=0.\prod_{n=1}^{\infty}\big(W_n(\alpha,M)\times W_n(\beta,M)^n\big).\]		
Then we have: (a) $\mathcal{W}_M(\alpha,\beta)\subset B(\alpha,\beta)$; (b) $\dim_H\mathcal{W}_M(\alpha,\beta)=\dim_H\mathcal{W}_M(\beta)$. 

For the proof of (a), note that for any $i\geq1$, each word in the set
\[\prod_{n=1}^i\big(W_n(\alpha,M)\times W_n(\beta,M)^n\big),\] 
is of length $i2^iM$ and the length of words in $W_n(\alpha,M)$ and $W_n(\beta,M)$, $n>i$, are of common lengths $2^{n-1}M$. So,
we may decompose every number $x\in\mathcal{W}_M(\alpha,\beta)$ into successive concatenations of $2^iM$-words. Take $n$ to be sufficiently large and write $n=k2^iM+r$, $0\leq r\leq 2^iM-1$. 
Then  
\[\frac{(k-1)[\alpha2^iM]}{k2^iM+r}\leq\b{\it M}_n(x)\leq\frac{(k+2)([\alpha2^iM]+1)}{k2^iM+r}.\] 
Let $n\to\infty$, then $k\to\infty$. It yields that 
\[\frac{[\alpha2^iM]}{2^iM}\leq\b{\it M}(x)\leq\frac{[\alpha2^iM]+1}{2^iM}.\] 
Since this inequality holds for all $i\geq1$, we have $\b{\it M}(x)=\alpha$ by letting $i\to\infty$. The other conclusion $\bar{M}(x)=\beta$ can be deduced in a similar manner. So, we have $\mathcal{W}_M(\alpha,\beta)\subset B(\alpha,\beta)$.

For the second assertion (b), the set of positions occupied by the words in sets $W_n(\alpha,M)$, $n\geq1$, is of density zero for any $x\in\mathcal{W}_M(\alpha,\beta)$. 
By deleting all these words in the sequences of numbers in $\mathcal{W}_M(\alpha,\beta)$, we obtain the set $\mathcal{W}_M(\beta)$.
Then, (b) is established by Lemma~\ref{lemma invariance}. 

By (a) and (b), we obtain that
\[\dim_HB(\alpha,\beta)\geq\dim_H\mathcal{W}_M(\alpha,\beta)=\dim_H\mathcal{W}_M(\beta).\]
Let $M\to\infty$, it yields that $\dim_HB(\alpha,\beta)\geq h(\beta)$ according to Lemma~\ref{lemma alpha M}. 
The proof of this part is finished.

(2) This part is split into four cases for consideration: i) $\alpha< (p-1)/2<\beta$; ii) $\alpha=(p-1)/2<\beta$; iii) $\alpha<\beta=(p-1)/2$; iv) $\alpha=\beta=(p-1)/2$.

Case i): $\alpha< (p-1)/2<\beta$. Since $h((p-1)/2)=1$, for $\epsilon$ small enough, there exists $\delta_0>0$ and $n_0>0$ such that 
\[\alpha<\frac{p-1}{2}-\delta_0<\frac{p-1}{2}+\delta_0<\beta\quad\text{and}\quad\frac{\log h\left(\frac{p-1}{2},n_0,\delta_0\right)}{(\log p)n_0}>1-\epsilon.\]
Based on the set
\[H\left(\frac{p-1}{2},n_0,\delta_0\right)=\bigg\{x_1\cdots x_{n_0}\in A^{n_0}\colon n_0(\frac{p-1}{2}-\delta_0)<\sum_{i=1}^{n_0}x_i<n_0(\frac{p-1}{2}+\delta_0)\bigg\},\]
define
\begin{align}\label{definition u n0}
	\mathcal{H}\left(\frac{p-1}{2},n_0,\delta_0\right)=0.H\left(\frac{p-1}{2},n_0,\delta_0\right)^\infty.
\end{align}
Then, by Lemma \ref{lemma pqm}, we have 
\[\dim_H\mathcal{H}\left(\frac{p-1}{2},n_0,\delta_0\right)=\frac{\log\card H\left(\frac{p-1}{2},n_0,\delta_0\right)}{(\log p)n_0}=\frac{\log h\left(\frac{p-1}{2},n_0,\delta_0\right)}{(\log p)n_0}>1-\epsilon.\]
Now, construct the set
\begin{align}\label{definition w n0}
	\mathcal{W}_{M,n_0,\delta_0}(\alpha,\beta)=0.\prod_{n=1}^{\infty}\left(W_n(\alpha,M)\times H\left(\frac{p-1}{2},n_0,\delta_0\right)^{3^{n-1}}\times W_n(\beta,M)\right).
\end{align}
Similar to the proof of the foregoing part, we can also deduce that 
\[\mathcal{W}_{M,n_0,\delta_0}(\alpha,\beta)\subset B(\alpha,\beta)\quad\text{and}\quad\dim_H\mathcal{W}_{M,n_0,\delta_0}(\alpha,\beta)=\dim_H\mathcal{H}\left(\frac{p-1}{2},n_0,\delta_0\right).\]
Thus, we have \[\dim_HB(\alpha,\beta)\geq\dim_H\mathcal{H}\left(\frac{p-1}{2},n_0,\delta_0\right)>1-\epsilon.\] 
It proves this case since $\epsilon$ is arbitrary.

Case ii): $\alpha=(p-1)/2<\beta$. In this case, take $\delta_0$ satisfying $(p-1)/2<(p-1)/2+\delta_0<\beta$. Put
\[H'\left(\frac{p-1}{2},n_0,\delta_0\right)=\bigg\{x_1\cdots x_{n_0}\in A^{n_0}\colon \frac{p-1}{2}n_0<\sum_{i=1}^{n_0}x_i<(\frac{p-1}{2}+\delta_0)n_0\bigg\}\]
and
\[\mathcal{H}'\left(\frac{p-1}{2},n_0,\delta_0\right)=0.H'\left(\frac{p-1}{2},n_0,\delta_0\right)^\infty.\]
Then, similar to the proof of Lemma \ref{lemma logcard}, we have 
\[\lim_{\delta_0\to0}\varliminf_{n_0\to\infty}\frac{\log\card H'\left(\frac{p-1}{2},n_0,\delta_0\right)}{(\log p)n_0}=\lim_{\delta_0\to0}\varliminf_{n_0\to\infty}\frac{\log\card H\left(\frac{p-1}{2},n_0,\delta_0\right)}{(\log p)n_0}=1.\]
So, for any $\epsilon>0$, there exists $n_0$ and $\delta_0$ such that \[\dim_H\mathcal{H}'\left(\frac{p-1}{2},n_0,\delta_0\right)>1-\epsilon.\] 
Next, construct the Moran set $\mathcal{W}_{M,n_0,\delta_0}'\big((p-1)/2,\beta\big)$ as $\mathcal{W}_{M,n_0,\delta_0}(\alpha,\beta)$ in \eqref{definition w n0} by replacing $H\big((p-1)/2,n_0,\delta_0\big)$ with $H'\left((p-1)/2,n_0,\delta_0\right)$. 
For the remaining proof of this case, it is just similar to the above discussion in Case i).  

Case iii): $\alpha<\beta=(p-1)/2$. It can be proved as that of Case ii).

Case iv): $\alpha=\beta=(p-1)/2$. In this case, take $n_0$ to be even and 
consider the set
\[\mathcal{W}_{n_0}\left(\frac{p-1}{2}\right)=0.W\left(\frac{p-1}{2},n_0\right)^\infty,\]
where
\[W\left(\frac{p-1}{2},n_0\right)=\bigg\{x_1\cdots x_{n_0}\in A^{n_0}\colon \sum_{i=1}^{n_0}x_i=\frac{p-1}{2}n_0\bigg\}.\]
Then we have
\[\lim_{n_0\to\infty}\dim_H\mathcal{W}_{n_0}\left(\frac{p-1}{2}\right)=h\left(\frac{p-1}{2}\right)=1\]
by Lemma \ref{lemma logcard}. It is evident that \[\mathcal{W}_{n_0}\left(\frac{p-1}{2}\right)\subset B\left(\frac{p-1}{2},\frac{p-1}{2}\right)=B\left(\frac{p-1}{2}\right).\]
Thus, $\dim_HB\big((p-1)/2\big)\geq\dim_H\mathcal{W}_{n_0}\big((p-1)/2\big)$. Let $n_0\to\infty$, then we have $\dim_HB\big((p-1)/2\big)=1$.

(3) Three cases will be considered in this part. 

Case 1): $(p-1)/2<\alpha\leq\beta<p-1$. This case can be proved as that of part (1) and we omit the details here.

Case 2): $(p-1)/2<\alpha<\beta=p-1$. First, by Lemma~\ref{lemma relation}, we may obtain that $B(\alpha,p-1)\subset\b{\it E}(\alpha)$. So, $\dim_HB(\alpha,p-1)\leq\dim_H\b{\it E}(\alpha)=h(\alpha)$.

Second, construct the set
\[\mathcal{W}_M(\alpha,p-1)=0.\prod_{n=1}^{\infty}\big(W_n(\alpha,M)\times(p-1)^n\big),\]
where $(p-1)^n$ means the word $(p-1)\cdots(p-1)$ of length $n$. Then we can deduce similarly that 
\[\mathcal{W}_M(\alpha,p-1)\subset B(\alpha,p-1)\quad\text{and}\quad \dim_H\mathcal{W}_M(\alpha,p-1)=\dim_H\mathcal{W}_M(\alpha).\]
Thus, $\dim_HB(\alpha,p-1)\geq\dim_H\mathcal{W}_M(\alpha)=h(\alpha)$.

The above two assertions imply that $\dim_HB(\alpha,p-1)=h(\alpha)$.

Case 3): $\alpha=\beta=p-1$. Since $B(p-1,p-1)\subset\b{\it E}(p-1)$ and $\dim_H\b{\it E}(p-1)=h(p-1)=0$, we obtain that $\dim_HB(p-1,p-1)=0=h(p-1)$.

The proof is finished now.
\end{proof}	

At last, it should be pointed out that we can even study the \emph{moving digit mean} of $x$:
\begin{align}
	M(x)=\lim_{n\to\infty}\lim_{m\to\infty}\frac{S_n(T^mx)}{n},\quad x\in I.
\end{align}	
Moreover, define the level sets related to it as
\begin{align}
	B^\ast(\alpha)=\left\{x\in I\colon M(x)=\alpha\right\},\quad0\leq\alpha\leq p-1.
\end{align}
In this situation, the set $B^\ast(\alpha)$ is somewhat trivial because we have
\begin{thm}
	Let $0\leq\alpha\leq p-1$. If $\alpha=0,1,\ldots,p-1$, then $B^\ast(\alpha)$ is a countable set. Otherwise, $B^\ast(\alpha)$ is an empty set. Hence, we always have 
	\begin{align}
		\dim_HB^\ast(\alpha)=0
	\end{align} 
	for any $0\leq\alpha\leq p-1$.
\end{thm}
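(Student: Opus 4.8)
The plan is to understand the structure imposed by requiring the double limit $\lim_{n\to\infty}\lim_{m\to\infty}S_n(T^mx)/n$ to exist. First I would analyze the inner limit: for $x=0.x_1x_2\ldots$, the quantity $S_n(T^mx)=\sum_{i=1}^n x_{m+i}$ takes values in $\{0,1,\ldots,(p-1)n\}$, so for fixed $n$ the existence of $\lim_{m\to\infty}S_n(T^mx)/n$ forces the integer-valued sequence $(S_n(T^mx))_{m\geq1}$ to be eventually constant, say equal to some $c_n\in\{0,\ldots,(p-1)n\}$ for all $m\geq M_n$. In particular, taking $n=1$, the sequence $(x_{m+1})_{m\geq1}$ is eventually constant, i.e.\ there is a digit $d\in A$ and an index $N$ with $x_k=d$ for all $k>N$. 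This already shows $x$ is eventually periodic with period one, hence $x$ is a $p$-adic rational (up to the non-terminating convention), and then $M(x)=\lim_{n\to\infty}\lim_{m\to\infty}(nd)/n=d$, an integer in $\{0,1,\ldots,p-1\}$.

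The key steps, in order, are: (i) show that existence of the inner limit for $n=1$ already forces $x$ to be eventually constant in its digits; (ii) deduce that for such $x$ the full double limit exists and equals the terminal digit $d$, so $M(x)\in\{0,1,\ldots,p-1\}$ and in particular $B^\ast(\alpha)=\emptyset$ whenever $\alpha\notin\{0,1,\ldots,p-1\}$; (iii) for $\alpha=d\in\{0,1,\ldots,p-1\}$, observe that $B^\ast(d)$ consists precisely of those $x$ whose digit sequence is eventually the constant string $ddd\ldots$; such sequences are parametrized by a finite initial word $x_1\cdots x_N$ together with $N\geq0$, giving a countable union of finite sets, hence $B^\ast(d)$ is countable; (iv) conclude $\dim_H B^\ast(\alpha)=0$ for every $\alpha$, using that countable sets and the empty set have Hausdorff dimension zero.

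One small technical point to handle carefully is the non-terminating convention for $p$-adic expansions: the number whose ``natural'' terminating expansion ends in all zeros is written here with a tail of digits $(p-1)$, so the phrase ``eventually constant digit sequence'' must be interpreted with this convention in mind; but this does not affect the count, since each real number still corresponds to exactly one admissible sequence and the admissible eventually-constant sequences are still countably many. I would also note the degenerate overlap between the endpoints $\alpha=0$ and the all-zero tail versus $\alpha=p-1$ and the all-$(p-1)$ tail, but again these contribute only countably many points.

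The main obstacle — really the only non-routine observation — is step (i): recognizing that the \emph{inner} limit existing (before taking $n\to\infty$) is an extremely rigid condition, because $S_n(T^mx)$ is integer-valued, so convergence of $S_n(T^mx)/n$ in $m$ is equivalent to eventual constancy of an integer sequence, and the $n=1$ case then pins down the entire tail of the digit expansion. Once this rigidity is seen, everything else is a direct counting argument and an appeal to the standard fact that countable sets have zero Hausdorff dimension.
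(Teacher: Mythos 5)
Your proposal is correct and follows essentially the same route as the paper: the paper's appeal to ``proof by contradiction and Cauchy's criterion'' is exactly your observation that the integer-valued sequence $S_n(T^mx)$ must be eventually constant for the inner limit to exist, which (via $n=1$) forces an eventually constant digit tail, so that $M(x)$ is the terminal digit and $B^\ast(\alpha)$ is the countable set of sequences ending in $\alpha^\infty$ when $\alpha$ is an integer and empty otherwise. Your write-up just makes this rigidity explicit (and handles the non-terminating convention), so it matches the paper's argument in substance.
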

\begin{proof}
  Let $\alpha=i$, where $i=0,1,\ldots, p-1$. Then each number in $B^\ast(\alpha)$ is ultimately 1-periodic ending with $i^\infty$, that is
  \[B^\ast(\alpha)=\{x\in I\colon x=0.x_1x_2\ldots x_niii\ldots,n\geq 1\}.\]	
  Thus, $B^\ast(\alpha)$ is countable. On the other hand, if $i<\alpha<i+1$, where $i=0,1,\ldots$, or $p-2$, then for any $x\in B^\ast(\alpha)$ and $n\geq1$ the limit $\lim_{m\to\infty}S_n(T^mx)/n$ does not exist according to the proof by contradiction and Cauchy's criterion for convergence of sequences. It follows that $B^\ast(\alpha)=\emptyset$.  	
\end{proof}	

\subsection*{Acknowledgment}
This work was finished when the author visited the Laboratoire d'Analyse et de Math\'{e}matiques Appliqu\'{e}es, Universit\'{e} Paris-Est Cr\'{e}teil Val de Marne, France. Thanks a lot for the great encouragement from his collaborator and the assistance provided by the laboratory. 
%This work was supported by the scientific research project of Hubei Provincial Department of Education (No. B2017604) and the state scholarship fund of the China Scholarship Council.


\begin{thebibliography}{99}	
  \bibitem{Be} A.~Besicovitch, On the sum of digits of real numbers represented in the dyadic system, {\it Math. Ann.}, \textbf{110} (1934), 321--330.
  \bibitem{BSS} L. Barreira, B. Saussol and J. Schmeling, Distribution of frequencies of digits via multifractal analysis, {\it J. Number
  	Theory}, \textbf{97} (2002), 410--438.
  \bibitem{Bi} P. Billingsley, {\it Ergodic Theory and Information},
  Wiley, New York, 1965.
  \bibitem{CT} H. B. Chen and J. M. Tang, The waiting spectra of the sets      described by the quantitative waiting time indicators. {\it Sci. China Math.}, \textbf{11} (2014), no. 57, 2335--2346.  
  \bibitem{CWX} H. B. Chen, Z. X. Wen and Y. Xiong, Dimensions of level sets related to tangential dimensions. {\it J. Math. Anal. Appl.}, \textbf{340} (2008),  959--967.
  \bibitem{E} H.G.~Eggleston, The fractional dimension of a set defined by decimal properties, {\it Quart. J. Math. Oxford Ser.}, \textbf{20} (1949), 31--36. 
  \bibitem{F97} K. J.~Falconer, {\it Fractal Geometry-Mathematical Foundations and Applications}, John Wiley, 1990.
  \bibitem{FF} A. H. Fan and D. J. Feng, On the distribution of long-term time averages on symbolic space,  {\it J. Stat. Phys.}, \textbf{99} (2000), 813--856.
  \bibitem{FFW} A. H. Fan, D. J. Feng and J. Wu, Recurrence, dimension and	entropy, {\it J. London Math. Soc.}, \textbf{64} (2001), no. 1, 229--244.
  \bibitem{FWW} D. J. Feng, Z. Y. Wen and J. Wu, Some dimensional results for homogeneous Moran sets, {\it Sci. China Math.}, \textbf{40} (1997), 172--178.
  \bibitem{GI1} D. Guido and T. Isola. Tangential dimensions I. Metric spaces, {\it Houston Journal Math.}, \textbf{31} (2005), 1023--1045.  
  \bibitem{GI2} D. Guido and T. Isola, Tangential dimensions II.
  Measures, {\it Houston Journal Math.}, \textbf{32} (2006), 423--444.
  \bibitem{TWWX} B. Tan, B. W. Wang, J. Wu and J. Xu, Localized Birkhoff average in beta dynamical systems, {\it Discrete Contin. Dyn. Syst.}, \textbf{33} (2013), 2547--2564.
\end{thebibliography}
\end{document}